\documentclass[12pt, reqno]{amsart}

\usepackage{amsmath, amsthm, amssymb}
\usepackage{enumitem}
\usepackage{pdflscape}
\usepackage{caption}

\usepackage{nccmath}

\usepackage{bm}

\usepackage{ifpdf}
\ifpdf
\usepackage[pdftex]{graphicx}
\else
\usepackage[dvips]{graphicx}
\fi
\usepackage{tikz}
 	 \usetikzlibrary{arrows,backgrounds}
\usepackage[all]{xy}

\usepackage{xifthen}

\usepackage{multicol}

\usepackage{tocvsec2}

\usepackage{bbm}

\usepackage{MnSymbol}

\input xy
\xyoption{all}

\usepackage[pdftex,plainpages=false,hypertexnames=false,pdfpagelabels]{hyperref}
\newcommand{\arxiv}[1]{\href{http://arxiv.org/abs/#1}{\tt arXiv:\nolinkurl{#1}}}
\newcommand{\arXiv}[1]{\href{http://arxiv.org/abs/#1}{\tt arXiv:\nolinkurl{#1}}}

\newcommand{\googlebooks}[1]{(preview at \href{http://books.google.com/books?id=#1}{google books})}

\usepackage{xcolor}
\definecolor{dark-red}{rgb}{0.7,0.25,0.25}
\definecolor{dark-blue}{rgb}{0.15,0.15,0.55}
\definecolor{medium-blue}{rgb}{0,0,.8}
\definecolor{DarkGreen}{RGB}{0,150,0}
\definecolor{rho}{named}{red}
\hypersetup{
   colorlinks, linkcolor={purple},
   citecolor={medium-blue}, urlcolor={medium-blue}
}

\usepackage{longtable}
\usepackage{fullpage}

\theoremstyle{plain}
\newtheorem{thm}{Theorem}[section]
\newtheorem*{thm*}{Theorem}

\newtheorem*{cor*}{Corollary}

\newtheorem*{conj*}{Conjecture}
\newtheorem{lem}[thm]{Lemma}

\newtheorem{prop}[thm]{Proposition}

\newtheorem*{quest*}{Question}
\newtheorem*{claim*}{Claim}

\theoremstyle{definition}
\newtheorem{defn}[thm]{Definition}

\newtheorem{alg}[thm]{Algorithm}

\newtheorem{sub-ex}[thm]{Sub-Example}
\newtheorem{rem}[thm]{Remark}
\newtheorem*{rem*}{Remark}

\usepackage{graphicx}
\usepackage{relsize}

\DeclareMathOperator{\Aut}{Aut}

\DeclareMathOperator{\coev}{coev}

\DeclareMathOperator{\ev}{ev}

\DeclareMathOperator{\Hom}{Hom}

\DeclareMathOperator{\id}{id}

\newcommand{\comment}[1]{}

\newcommand{\be}{\begin{enumerate}[label=(\arabic*)]}
\newcommand{\ee}{\end{enumerate}}

\newcommand{\C}{\mathbb{C}}

\newcommand{\cC}{\mathcal{C}}
\newcommand{\cD}{\mathcal{D}}
\newcommand{\cR}{\mathcal{R}}
\newcommand{\cY}{\mathcal{Y}}

\usetikzlibrary{calc}
\newcommand*{\vertchar}[2][0pt]{%
  \tikz[
    inner sep=0pt,
    shorten >=-.15ex,
    shorten <=-.15ex,
    line cap=round,
    baseline=(c.base),
  ]\draw
    (0,0) node (c) {#2}
    ($(c.south)+(#1,0)$) -- ($(c.north)+(#1,0)$);%
}

\def\semicolon{;}
\def\applytolist#1{
    \expandafter\def\csname multi#1\endcsname##1{
        \def\multiack{##1}\ifx\multiack\semicolon
            \def\next{\relax}
        \else
            \csname #1\endcsname{##1}
            \def\next{\csname multi#1\endcsname}
        \fi
        \next}
    \csname multi#1\endcsname}

\def\calc#1{\expandafter\def\csname c#1\endcsname{{\mathcal #1}}}
\applytolist{calc}QWERTYUIOPLKJHGFDSAZXCVBNM;
\def\bbc#1{\expandafter\def\csname bb#1\endcsname{{\mathbb #1}}}
\applytolist{bbc}QWERTYUIOPLKJHGFDSAZXCVBNM;
\def\bfc#1{\expandafter\def\csname bf#1\endcsname{{\mathbf #1}}}
\applytolist{bfc}QWERTYUIOPLKJHGFDSAZXCVBNM;
\def\sfc#1{\expandafter\def\csname s#1\endcsname{{\sf #1}}}
\applytolist{sfc}QWERTYUIOPLKJHGFDSAZXCVBNM;
\def\fc#1{\expandafter\def\csname f#1\endcsname{{\mathfrak #1}}}
\applytolist{fc}QWERTYUIOPLKJHGFDSAZXCVBNM;

\renewcommand{\Vec}{{\sf Vec}}

\newcommand{\noshow}[1]{}
\renewcommand{\MR}[1]{}

\makeatletter
\newcommand{\hashdef}[2]{\@namedef{#1}{#2}}
\newcommand{\hashlookup}[1]{\@nameuse{#1}}
\makeatother

\usepackage{longtable}

\usetikzlibrary{shapes}
\usetikzlibrary{cd}
\usetikzlibrary{decorations,decorations.pathreplacing,decorations.markings}
\usetikzlibrary{fit,calc,through}
\usetikzlibrary{external}
\usetikzlibrary{arrows,backgrounds,patterns.meta}
\tikzset{vertex/.style = {shape=circle,draw,fill=black,inner sep=0pt,minimum size=5pt}}
\tikzset{edge/.style = {->,> = latex', bend right}}
\tikzset{
	super thick/.style={line width=3pt}
}
\tikzset{
    quadruple/.style args={[#1] in [#2] in [#3] in [#4]}{
        #1,preaction={preaction={preaction={draw,#4},draw,#3}, draw,#2}
    }
}
\tikzstyle{shaded}=[fill=red!10!blue!20!gray!30!white]
\tikzstyle{unshaded}=[fill=white]
\tikzstyle{empty box}=[circle, draw, thick, fill=white, opaque, inner sep=2mm]
\tikzstyle{annular}=[scale=.7, inner sep=1mm, baseline]
\tikzstyle{rectangular}=[scale=.75, inner sep=1mm, baseline=-.1cm]
\tikzstyle{mid>}=[decoration={markings, mark=at position 0.5 with {\arrow{>}}}, postaction={decorate}]
\tikzstyle{mid<}=[decoration={markings, mark=at position 0.5 with {\arrow{<}}}, postaction={decorate}]
\tikzstyle{over}=[double, draw=white, super thick, double=]

\newcommand{\roundNbox}[6]{
	\draw[rounded corners=5pt, very thick, #1] ($#2+(-#3,-#3)+(-#4,0)$) rectangle ($#2+(#3,#3)+(#5,0)$);
	\coordinate (ZZa) at ($#2+(-#4,0)$);
	\coordinate (ZZb) at ($#2+(#5,0)$);
	\node at ($1/2*(ZZa)+1/2*(ZZb)$) {#6};
}

  \newcommand{\tikzmath}[2][]
     {\vcenter{\hbox{\begin{tikzpicture}[#1]#2
                     \end{tikzpicture}}}
     }

\tikzdeclarepattern{
  name=primeddots,
  type=uncolored,
  bounding box={(-.6pt,-.6pt) and (.6pt,.6pt)},
  tile size={(3pt,3pt)},
  tile transformation={rotate=60},
  parameters={none}, 
  code={
    \fill(0pt,0pt) circle (.35pt);
  }
}

\tikzstyle{primedregion}[none]=[
	preaction={fill=#1},
	pattern=primeddots,
  draw=#1,
]

\definecolor{violet}{RGB}{148,0,211}

\newcommand{\maxwidth}[2][\linewidth]{
  \setbox0=\hbox{#2}
  \ifthenelse{\dimtest{\wd0}>{#1}}%
    {\resizebox{#1}{!}{#2}}
    {\usebox0}
}

\tikzset{->-/.style={decoration={
  markings,
  mark=at position .5 with {\arrow{>}}},postaction={decorate}}}
\tikzset{-<-/.style={decoration={
  markings,
  mark=at position .5 with {\arrow{<}}},postaction={decorate}}}

\tikzset{-->-/.style={decoration={
  markings,
  mark=at position .7 with {\arrow{>}}},postaction={decorate}}}

\usetikzlibrary{arrows,matrix}
\usepackage{mathtools}
\newcommand{\oVec}{\operatorname{Vec}(G,\omega, \pi)}

\newcommand{\oVecc}{\operatorname{Vec}(G,\omega)}

\usepackage{url}

\begin{document}

\title{A diagrammatic presentation for every pivotal pointed fusion category}
\author{Chumeng Di}
\address{Department of Mathematics, The Ohio State University, 100 Math Tower, 231 W 18th Ave, Columbus, OH 43210}
\email{di.59@osu.edu}

\author{Anup Poudel}
\address{School of Mathematics, Georgia Institute of Technology, 686 Cherry St NW, Atlanta, GA 30332}
\email{apoudel33@gatech.edu}

\maketitle

\begin{abstract}
    We provide a generators and relations presentation of pivotal pointed fusion categories, $\oVec$. Unlike the well-known skeletal model, our presentation is strict and allows multiple isomorphic objects. Our main tool is skein theory, which allows us to apply topological tools to understand the relations of morphisms in the category. 
\end{abstract}

\section{Introduction}
\typeout{
introduction 1-2 pages
why do we care about this problem? Why tensor cat? Significance of Vec G omega? Why care about planar presentations of categories? Why graphical calculus?}

Tensor categories arise naturally in quantum physics, as they describe symmetries of quantum systems and are central objects of study in quantum algebra and quantum topology. Traditionally, transformations that describe the symmetries of a physical system form a group. Thus we use group actions on a system to characterize its symmetries. If the transformations which can be consecutively applied to a system are no longer required to be invertible, then monoid actions should be considered instead. When our system of interest is an object of an $n$-category, its higher symmetries are characterized by the action of a higher monoid, i.e., a monoidal $(n-1)$-category.  Important examples of quantum systems such as $C^*$-algebras, von Neumann algebras, and quantum spin chains are objects of 2-categories, and their symmetries form tensor categories.

Diagrammatics of tensor categories are 2-dimensional in nature: morphisms compose both vertically under composition and horizontally under tensor product. Thus, to write down a presentation for a tensor category, we use 2-dimensional graphical calculus to denote generating objects, generating morphisms, and relations. Such a skein-theoretic approach makes the algebraic structure visible and allows one to do calculations in a tensor category. 

Skein theory is an important tool in quantum topology that has led to many new connections among the fields of quantum algebra, low dimensional topology and subfactor theory. Skein theoretic presentations of pivotal tensor categories associated to simple Lie algebras of rank 2 were given by Kuperberg in \cite{Kup}. Since then, many other families of pivotal tensor categories have been presented skein-theoretically, see for example \cite{CKM, BERT, TVW}. These diagrammatic presentations have been crucial for categorification of quantum invariants as the skein relations naturally lead to relations between ``foams" at the $2-$categorical level which have given rise to new homology theories, see for example \cite{Kh, KL}. In the context of subfactor theory and planar algebras, Morrison-Peters-Snyder gave a skein-theoretic presentation of the dihedral planar algebra \cite{MPS10} and Bigelow \cite{Big10} gave a skein-theoretic presentation of ADE planar algebras. Both works inspired the construction of a new subfactor planar algebra in \cite{BMPS12} and completed the classification of a class of subfactors. There has been much work surrounding diagrammatic presentations of planar algebras and fusion categories such as \cite{Pet09}, \cite{MP15}, \cite{czenky2024Zn} and \cite{Mol24}.

Pivotal categories naturally arise in TQFTs. The pivotality structure that coherently assigns a dual $x^\vee$ to each $x$ such that $x \simeq x^{\vee \vee}$ comes from the orientation reversal duality on manifolds in the cobordism category. In this paper, we study pivotal pointed fusion categories $\oVec$ which naturally arise in a particular example of TQFT called the Dijkgraaf-Witten theory. A pivotal presentation of $\oVec$ will be useful in characterizing its actions.

It is well-accepted that for a general Cauchy complete tensor category $\cC,$ we can ask for at most one of the properties skeletal and strict. As one of the first examples of tensor categories, the skeletal $\oVecc$ has been well-studied \cite{EGNO}. We provide a skein-theoretic\footnote{We follow an optimistic convention for our diagrams i.e., we read them from bottom to top. As usual, the 2D diagrammatic calculus for fusion categories suppresses associators and unitors.} presentation of the strict pivotal $\oVec$. Our presentation includes the entire multiplication table of $G$ as generating morphisms. As a possible future project, we seek to reduce the number of generators in our presentation for $\oVec$, when a group presentation for $G$ is given.

\begin{thm}\label{main theorem intro}
    The following is a presentation for pivotal category $\oVec$:
    
\vspace{.3cm}
\textnormal{Generating objects:}
\begin{enumerate}[label=\textup{(O\arabic*)}]
\item 
For each $g \in G,$ $g \neq \mathbf{1},$ we have objects $g:\
    \tikzmath{
    \draw[thick, blue, ->](0,0) -- (0,.6);
    \draw[thick, blue](0,.6) --(0,.8);
    \node[blue] at (.3, 0.3) {$g$};
    }$
    and
    $g^*:\
    \tikzmath{
    \draw[thick, blue, ->](0,.8) -- (0,.5);
    \draw[thick, blue](0,.5) --(0,0);
    \node[blue] at (.3, 0.3) {$g^*$};
    }$. 
\end{enumerate}   

\vspace{.3cm}
\textnormal{Generating isomorphisms:}
\begin{enumerate}[label=\textup{(M\arabic*)}]
\item 
For each pair $g,h \in G$, choose an isomorphism
$\mu_{g,h}: gh \rightarrow g \otimes h$
$
\tikzmath{
\draw[thick, blue,-->-] (0,0) -- (-.4,.4) node[above]{$\scriptstyle g$};
\draw[thick,blue, -->-] (0,0) -- (.4,.4) node[above]{$\scriptstyle h$};
\draw[thick, blue, ->-] (0,-.4) node[below]{$\scriptstyle gh$} -- (0,0);
\filldraw[blue] (0,0) node[left]{$\scriptstyle \mu_{g,h}$} circle (.0cm);
}$ to be the inverse of the tensorator of $F$, and the tensorator denoted by
$\tikzmath{
    \draw[thick, blue, -->-] (0,0) -- node[above,yshift=.1cm] {$\scriptstyle gh$}(0,0.4);
    \draw[thick, blue, ->] (-0.4,-0.4) node[below] {$\scriptstyle g$} -- (-0.2,-0.2);
    \draw[thick, blue] (-0.2, -0.2) -- (0,0);
    \draw[thick, blue, ->] (0.4,-0.4) node[below, xshift=.1cm] {$\scriptstyle h$} -- (0.2,-0.2);
    \draw[thick, blue] (0.2, -0.2) -- (0,0);
     \node[blue] at (.5,0.1) {$\scriptstyle \mu_{g,h}^{-1}$};
    }$.
    When $h=g^{-1}$, we denote the tensorator by
    $\tikzmath{
\draw[thick, blue, ->-] (-.4,-.4) -- (0,0);
\draw[thick,blue,->-] (.4,-.4) -- (0,0); 
\node[blue] at (-0.6, -.3) {$\scriptstyle g$};
\node[blue] at (.7,-.3) {$\scriptstyle g^{-1}$};
}$.

\item 
For each pair of duals $g, g^*$, $g \neq \mathbf{1},$ we have isomorphisms $\coev_{g^*} $
$\tikzmath{
\draw[thick, blue, ->] (0,0) arc(-180:0:0.3);
\node[blue] at (-0.2, 0) {$\scriptstyle g^*$};
\node[blue] at (0.8, 0) {$\scriptstyle g$};
}$,
$\ev_{g^*}$
$\tikzmath{
\draw[thick, blue, ->] (0,0) arc(180:0:0.3);
\node[blue] at (-0.2, 0) {$\scriptstyle g$};
\node[blue] at (0.9, 0) {$\scriptstyle g^*$};
}$,
$\coev_{g}$
$\tikzmath{
\draw[thick, blue, <-] (0,0) arc(-180:0:0.3);
\node[blue] at (-0.2, 0) {$\scriptstyle g$};
\node[blue] at (0.8, 0) {$\scriptstyle g^*$};
}$,
$\ev_{g}$
$\tikzmath{
\draw[thick, blue, <-] (0,0) arc(180:0:0.3);
\node[blue] at (-0.25, 0) {$\scriptstyle g^*$};
\node[blue] at (0.9, 0) {$\scriptstyle g$};
}$
that witness duality.
\end{enumerate}   

\vspace{.3cm}
\textnormal{Relations:}
\begin{enumerate}[label=\textup{(R\arabic*)}]
\item
For each $(g,h,k) \in G^3$, 
$\tikzmath{
\draw[thick, blue] (.3,.3) node[above]{$\scriptstyle k$} -- (0,0) -- (-.3,.3) node[above]{$\scriptstyle h$};
\draw[thick, blue] (0,0) -- (-.3,-.3) -- (-.9,.3) node[above]{$\scriptstyle g$};
\draw[thick, blue] (-.3,-.3) -- (-.3,-.6) node[below]{$\scriptstyle ghk$};
\filldraw[blue] (0,0) node[right]{$\scriptstyle \mu_{h,k}$} circle (.0cm);
\filldraw[blue] (-.3,-.3) node[right]{$\scriptstyle \mu_{g,hk}$} circle (.0cm);
}
=
\omega(g,h,k)
\tikzmath{
\draw[thick, blue] (-.3,.3) node[above]{$\scriptstyle g$} -- (0,0) -- (.3,.3) node[above]{$\scriptstyle h$};
\draw[thick, blue] (0,0) -- (.3,-.3) -- (.9,.3) node[above]{$\scriptstyle k$};
\draw[thick, blue] (.3,-.3) -- (.3,-.6) node[below]{$\scriptstyle ghk$};
\filldraw[blue] (0,0) node[left]{$\scriptstyle \mu_{g,h}$} circle (.0cm);
\filldraw[blue] (.3,-.3) node[left]{$\scriptstyle \mu_{gh,k}$} circle (.0cm);
}
$, for some $\ \omega \in [\omega]$.

\item For each $g \in G,\ g \neq \mathbf{1}:$
 $\tikzmath{
\draw[thick, blue, ->] (0,0) -- (0,0.4);
\draw[thick, blue, ->] (-0.4, -0.4) -- (-0.2, -0.2);
\draw[thick, blue] (-.2,-.2) -- (0,0);
\draw[thick, blue, dotted] (0.4,-0.4) -- (0,0);
\node[blue] at (-0.2, 0.2) {$\scriptstyle g$};
}$
=
$\tikzmath{
\draw[thick, blue, ->](0,-0.4) -- (0,0.4);
\node[blue] at (0.2,0) {$\scriptstyle g$};
}$ and  
$\tikzmath{
\draw[thick, blue, ->] (0,0) -- (0,0.4);
\draw[thick, blue, ->] (0.4, -0.4) -- (0.2, -0.2);
\draw[thick, blue] (.2,-.2) -- (0,0);
\draw[thick, blue, dotted] (-0.4,-0.4) -- (0,0);
\node[blue] at (-0.2, 0.2) {$\scriptstyle g$};
}$
=
$\tikzmath{
\draw[thick, blue, ->](0,-0.4) -- (0,0.4);
\node[blue] at (0.2,0) {$\scriptstyle g$};
}.$

\vspace{.3cm}

\item For each $g \in G,\ g \neq \mathbf{1}:$
$\tikzmath{
\draw[thick, blue] (-.6,-.5) -- node[left]{$\scriptstyle g^*$} (-.6,0) arc(180:0:.3) ;
\draw[thick, blue, <-] (0,0) node[right] {$\scriptstyle g$} arc(-180:0:.3) -- node[right]{$\scriptstyle g^*$} (.6,.5);
}
=
\tikzmath{
\draw[thick, blue, ->] (0,.5) -- node[right]{$\scriptstyle g^*$} (0,0);
\draw[thick, blue] (0,0) -- (0,-.5);
},$
$\tikzmath{
\draw[thick, blue] (-.6,.5) -- node[left]{$\scriptstyle g$} (-.6,0) arc(-180:0:.3) ;
\draw[thick, blue, <-] (0,0) node[right] {$\scriptstyle g^*$} arc(180:0:.3) -- node[right]{$\scriptstyle g$} (.6,-.5);
}
=
\tikzmath{
\draw[thick, blue] (0,.5) -- node[right]{$\scriptstyle g$} (0,0);
\draw[thick, blue, <-] (0,0) -- (0,-.5);
},$
$\tikzmath{
\draw[thick, blue,->] (-.6,-.5) -- node[left]{$\scriptstyle g$} (-.6,0) arc(180:0:.3) ;
\draw[thick, blue] (0,0) node[right] {$\scriptstyle g^*$} arc(-180:0:.3) -- node[right]{$\scriptstyle g$} (.6,.5);
}=
\tikzmath{
\draw[thick, blue] (0,.5) -- node[right]{$\scriptstyle g$} (0,0);
\draw[thick, blue, <-] (0,0) -- (0,-.5);
},$
$\tikzmath{
\draw[thick, blue,->] (-.6,.5) -- node[left]{$\scriptstyle g^*$} (-.6,0) arc(-180:0:.3) ;
\draw[thick, blue] (0,0) node[right] {$\scriptstyle g$} arc(180:0:.3) -- node[right]{$\scriptstyle g^*$} (.6,-.5);
}
=
\tikzmath{
\draw[thick, blue,->] (0,.5) -- node[right]{$\scriptstyle g^*$} (0,0);
\draw[thick, blue] (0,0) -- (0,-.5);
}.$

\vspace{.3cm}

\item 
For each $g \in G,\ g \neq \mathbf{1}:$
$\tikzmath{
\draw[thick, blue, ->] (-.3,0) arc (180:-180:.3cm);
\node[blue] at (-.5,0) {$\scriptstyle g$};
\node[blue] at (.5,0) {$\scriptstyle g^*$};
}
=
\pi(g)
$.
\end{enumerate}  
\end{thm}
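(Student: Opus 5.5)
Let $\cP$ denote the strict pivotal category presented by (O1), (M1)--(M2), (R1)--(R4). The plan is to build a pivotal functor $F:\cP\to\oVec$ from the generators and show it is an equivalence. The target is the skeletal model, with simple objects $\C_g$, associator $\omega$, and pivotal structure given by $\pi$. Here $\pi:G\to\C^\times$ is a character, and the loop value is the dimension $\pi(g)$.

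\textbf{Definition of $F$.} Send $g\mapsto \C_g$ and $g^*\mapsto \C_{g^{-1}}$, and send the empty word to $\mathbf 1$. Send $\mu_{g,h}$ to the (inverse) tensorator, namely the identity $\C_{gh}\to\C_g\otimes\C_h$ up to the unit constraints. Send the four cups and caps to the standard evaluation and coevaluation maps of $\oVec$, rescaled so that:
\begin{itemize}
\item the zigzag relations (R3) hold;
\item the clockwise and counterclockwise circles evaluate to $\pi(g)$ and $\pi(g)^{-1}=\pi(g^{-1})$, matching the pivotal structure.
\end{itemize}
We then check that each relation holds in the image.
\begin{itemize}
\item (R1) is exactly the pentagon-compatible statement that the associator on $\C_g\otimes\C_h\otimes\C_k$ is the scalar $\omega(g,h,k)$. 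Changing $\omega$ within its cohomology class corresponds to rescaling the $\mu$'s, which is why the statement says ``for some $\omega\in[\omega]$.''
\item (R2) holds once we normalize $\omega$ so that $\mu_{g,\mathbf 1}$ and $\mu_{\mathbf 1,g}$ are the unitors, i.e.\ so that $\omega$ is a normalized cocycle.
\item (R3) and (R4) hold by the choice of scalars above.
\end{itemize}
By the universal property of a presentation, $F$ then extends to a strict pivotal functor.

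\textbf{Essential surjectivity and fullness.} Every simple $\C_g$ is hit by $g$, or by $\mathbf 1$ when $g=\mathbf 1$. Moreover every object of $\cP$, a word in $g$'s and $h^*$'s, is isomorphic in $\cP$ to a single generator $g$. To see this, use $\mu^{-1}$ to merge adjacent letters, and use cups and caps together with $\mu_{g,g^{-1}}$ to convert $g^*$ into $g^{-1}$. This gives an isomorphism $g^*\cong g^{-1}$ (built from $\coev$ and a trivalent vertex). Since each $\Hom$ space in $\oVec$ between simples is $0$ or $1$-dimensional, fullness reduces to showing that $\Hom_\cP(x,y)\neq0$ whenever $F$ sends $x,y$ to the same degree. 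This follows because composites of these generating isomorphisms are nonzero: they are invertible, and $F$ of an invertible map is invertible.

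\textbf{Faithfulness (main obstacle).} We must show that $\dim\Hom_\cP(x,y)\le1$, i.e.\ that any two diagrams with the same boundary agree up to the scalar that $F$ predicts. The approach is to first reduce, via the isomorphisms above, to $\Hom_\cP(g,g)$, and then to show every closed-up diagram there evaluates to a scalar multiple of $\id_g$.

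The key step is an evaluation algorithm for a planar trivalent graph with oriented cups and caps. It has three moves:
\begin{enumerate}
\item Use (R3) and the isotopy invariance it provides, together with the pivotal structure, to straighten strands and rotate vertices. Rotated vertices must be expressed via $\mu$'s and cups and caps; this requires checking that rotating $\mu_{g,h}$ by $2\pi$ gives back $\mu_{g,h}$, a consequence of (R4) and multiplicativity of $\pi$.
\item Use (R1) to reassociate trees.
\item Use ``bigon'' reductions $\mu_{g,h}^{-1}\circ\mu_{g,h}=\id$, which hold since $\mu$ is invertible, together with the inverse relation $\mu\circ\mu^{-1}=\id_{g\otimes h}$, to remove internal faces.
\end{enumerate}
Induction on the number of vertices plus caps shows that every diagram reduces to a scalar times a standard tree, and closed loops become $\pi(g)$ by (R4).

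The delicate part is that $\id_{g\otimes h}=\mu_{g,h}\mu_{g,h}^{-1}$ must be used to create vertices before an Euler-characteristic argument can find a reducible face. I would first attempt an argument that every planar trivalent graph has a face of degree at most $5$, and then use (R1) and the bigon moves to shrink such faces. Scalars are consistent because $F$ is a well-defined functor, so any two reductions agree after applying $F$. Hence $F$ is faithful, and therefore an equivalence.
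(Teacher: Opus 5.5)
Your overall architecture matches the paper's. You build a functor from the presented category to a model of $\oVec$, get fullness from one-dimensional Hom spaces and invertibility of the generators, and get faithfulness by reducing to $\Hom(g,g)$ and running an Euler-characteristic face argument. The genuine gap is in faithfulness, at your move (1): you give no mechanism for eliminating cups and caps, and the face argument only works once they are gone.

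\begin{itemize}
\item Isotopy and (R3) remove only zigzags, and (R4) removes only closed loops.
\item A $g^*$-segment can instead form a curl around part of the diagram. An example is $\ev_{h^*}\circ(\id_h\otimes\mu^{-1}_{h^{-1},h}\otimes\id_{h^*})\circ(\mu_{h,h^{-1}}\otimes\coev_h)$. Here the strand leaving $\mu_{h,h^{-1}}$ on the left must pass over $\mu^{-1}_{h^{-1},h}$ and come back up into its right input.
\item No planar isotopy makes all edges of this diagram point upward.
\item The face it bounds is not a bigon $\mu^{-1}_{g,h}\circ\mu_{g,h}$, and the triangle, square and pentagon reductions are likewise tied to upward shapes.
\end{itemize}
So neither your bigon move nor a face of degree at most $5$ found by Euler characteristic guarantees progress, and your induction on vertices plus caps is unsupported. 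The check you single out, that rotating $\mu_{g,h}$ by $2\pi$ returns $\mu_{g,h}$, is automatic in a strict pivotal category and is not the issue. What must be derived from (R1)--(R4), with explicit scalars, is how to trade each cup or cap for a trivalent vertex.

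The missing idea is one you already wrote down for essential surjectivity: the isomorphism $\delta_g\colon g^{-1}\to g^*$ built from $\coev_g$ and $\mu^{-1}_{g^{-1},g}$. The paper uses it to prove the replacement relations (R6)--(R9):
\begin{itemize}
\item $\ev_g=\mu^{-1}_{g^{-1},g}\circ(\delta_g^{-1}\otimes\id_g)$;
\item $\coev_g=\omega(g,g^{-1},g)\,(\id_g\otimes\delta_g)\circ\mu_{g,g^{-1}}$;
\item $\ev_{g^*}=\frac{\pi(g)}{\omega(g,g^{-1},g)}\,\mu^{-1}_{g,g^{-1}}\circ(\id_g\otimes\delta_g^{-1})$;
\item $\coev_{g^*}=\pi(g^{-1})\,(\delta_g\otimes\id_g)\circ\mu_{g^{-1},g}$.
\end{itemize}
These follow from the zigzags, (R4), invertibility of $\mu$, and the snake identity $(\id_g\otimes\mu^{-1}_{g^{-1},g})\circ(\mu_{g,g^{-1}}\otimes\id_g)=\omega(g,g^{-1},g)^{-1}\id_g$, which comes from (R1) and (R2).

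Substitute them everywhere. Every $g^*$-segment then runs from a cup to a cap, since no vertex has a $g^*$ leg and the boundary is $g$. Its $\delta_g$ and $\delta_g^{-1}$ therefore cancel. After attaching dotted legs via (R2), only upward trivalent vertices remain. Only for such diagrams does the face argument go through:
\begin{itemize}
\item close the two boundary strands on the sphere, so that at least one of the at least three faces of degree at most $5$ is interior;
\item reduce bigons, triangles, squares and pentagons using (R1), the bigon move and the derived H-move.
\end{itemize}

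A smaller point: skeletal $\oVec$ is not strict, so a strict pivotal functor into it is not meaningful as written. Map into a strictification, as the paper does with $\cD_0$, or give $F$ a tensorator on words that trivializes $\omega$.
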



\subsection{Acknowledgement}

The authors would like to thank David Penneys for suggesting this problem and many helpful conversations. C.D. was partially supported by NSF DMS-2154389 and NSF DMS-2554723 and A.P. acknowledges support from an AMS-Simons Travel Grant.

\section{\texorpdfstring{$\oVec$}{oVec} as a skeletal pivotal category}\label{sec:1}

\typeout{
Fix conventions. Associator, trivial unitor of monoidal category; give name to monoidal functor data

$\oVec$ as a monoidal category, well-defined (cohomologous implies equivalent) does $G$ have to be finite group? Ever used semisimplicity somewhere?

$\oVec$ admits a dual functor, dual functor is a property.

Definition of pivotal structure and spherical structure; $\oVec$ admits a spherical structure; the pivotal structures on $\oVec$ are classified by..., where a $\pi \in \Hom(G, \bbC^\times)$ represents the pivotal structure given by... briefly mention $\Aut_\otimes(\id_\cC)$ .. say in the following notation $\oVec$ represents $(\oVec, \pi)$}



 For details regarding the definitions of monoidal categories we refer the reader to \cite{EGNO}. In a linear monoidal category $\cC$, we denote the associator isomorphisms by $\alpha_{a,b,c}: a \otimes (b \otimes c) \to (a \otimes b) \otimes c$ for $a,b,c \in \cC$, unitor isomorphisms by $\lambda_a: 1_\cC \otimes a \to a$ and $\rho_a: a \otimes 1_\cC  \to a$ for $a \in \cC$. For a monoidal functor $F: \cC_1 \to \cC_2$ between monoidal categories, we denote the tensorator isomorphisms by $F^2_{a,b}: F(a) \otimes F(b) \to F(a \otimes b)$.\\

For a finite group $G$ and a 3-cocycle $\omega \in Z^3(G ; U(1))$, $\oVecc$ is the monoidal category where objects are $G$-graded vector spaces, i.e., vector spaces in the form $V = \bigoplus_{g \in G} V_g$, and morphisms are linear maps which preserve the grading. Tensor product is given by
\[
(V \otimes W)_g \coloneq \bigoplus_{hk =g} V_h \otimes W_k.
\]
Associators and unitors are given by 
\[\alpha_{g,h,k} \coloneq \omega(g,h,k) \id_{ghk},\ \lambda_g \coloneq \omega(\mathbf{1},\mathbf{1},g) \id_g,\  \rho_g \coloneq  \omega(g,\mathbf{1},\mathbf{1})^{-1} \id_g,\]
where group elements $g \coloneq \C_g$ denote simple objects of this category. There are $|G|$ simples in our skeletal model $\oVecc$, one in each equivalence class. The 3-cocycle condition of $\omega$ is exactly what makes the associators defined above satisfy the pentagon relation in a monoidal category:

\[\begin{tikzcd}
	{g(h(kl))} && {(gh)(kl)} \\
	&&& {((gh)k)l} \\
	{g((hk)l)} && {(g(hk))l}
	\arrow["{\omega(g,h,kl)}", from=1-1, to=1-3]
	\arrow["{\omega(h,k,l)}", from=1-1, to=3-1]
	\arrow["{\omega(gh,k,l)}"{pos=0.4}, from=1-3, to=2-4]
	\arrow["{\omega(g,hk,l)}", from=3-1, to=3-3]
	\arrow["{\omega(g,h,k)}"'{pos=0.4}, from=3-3, to=2-4]
\end{tikzcd}\]

Cohomologous $\omega$ and $\omega'$ define monoidally equivalent categories $\oVecc$ and $\operatorname{Vec}(G,\omega')$. Let $\omega' = \omega \cdot \partial^3(\mu)$ for some $\mu: G^2 \to U(1).$ Define $F: \oVecc \to \operatorname{Vec}(G,\omega')$ trivially on objects and morphisms. For $F^2_{g,h} \coloneq \mu(g,h) \id_{gh}$, $\omega' = \omega \cdot \partial^3(\mu)$ is exactly the monoidality axiom of $F,$ and $F$ is a monoidal equivalence. \comment{We may write $\Vec_G^{[\omega]}$ for this monoidal category.}\\

A choice of dual for each simple $g$, $(g^\vee \coloneq g^{-1}$,\ $\ev_g: g^{-1} \otimes g = \mathbf{1} \to \mathbf{1},\ \coev_g:\mathbf{1} \to g \otimes g^{-1}=\mathbf{1})$ assemble into a dual functor $()^\vee: \oVecc \to \oVecc^{\text{mop}}$, where $\cC^\text{mop}$ denotes the category obtained by reversing the arrows and monoidal product of $\cC$. Dual functors on a tensor category are unique up to unique monoidal natural isomorphism.\\

We include the definitions concerning pivotal categories that we will need; for more detailed background, we refer the reader to \cite{penneys2018unitarydualfunctorsunitary}.

\begin{defn} \label{pivotal structure}
    A \textbf{pivotal structure} on a monoidal category $\cC$ is a choice of dual functor $()^\vee$ together with a monoidal natural isomorphism $\varphi: \id_\cC \Rightarrow ()^{\vee \vee}.$ Naturality of $\varphi$ is a rotation by $2\pi$ in graphical calculus: 
    \[
    \tikzmath{
    \draw[thick,blue] (0,0) -- node[left]{$\scriptstyle a$} (0,.4);
    \roundNbox{blue}{(0, .7)}{.3}{0}{0}{$\varphi_a$};
    \draw[thick, blue] (0,1) -- node[left]{$\scriptstyle a^{\vee \vee}$} (0,2) arc(180:0:.8) -- (1.6,1.2) arc(0:-180:.3)  node[left,yshift =-.2cm]{$\scriptstyle a$};
    \roundNbox{blue}{(1,1.5)}{.3}{0}{0}{$f$};
    \draw[thick,blue] (1,1.8) node[right, yshift =.2cm]{$\scriptstyle b$} arc(0:180:.3) -- (.4,1.2) arc(-180:0:.8) -- (2,3) node[right]{$\scriptstyle b^{\vee \vee}$};
    }=
    \tikzmath{
    \draw[thick,blue] (0,0) -- node[right]{$\scriptstyle a$} (0,.4);
    \roundNbox{blue}{(0,.7)}{.3}{0}{0}{$f$};
    \draw[thick,blue] (0,1) -- (0,1.4) node[right, yshift = -.2cm]{$\scriptstyle b$};
    \roundNbox{blue}{(0,1.7)}{.3}{0}{0}{$\varphi_b$};
    \draw[thick,blue] (0,2) -- (0,2.4) node[right]{$\scriptstyle b^{\vee \vee}$};
    }
    \hspace{1.5cm}
    \forall f \in \Hom(a,b).
    \]
\end{defn}
\begin{defn}
     Let $(\cC, \varphi: \id_{\cC} \Rightarrow ()^{\vee \vee})$ be a pivotal category. For any object $c \in \cC,$ we have
\begin{align*}
    &\mathbf{1} \xrightarrow{\coev_{c^\vee}} c^\vee \otimes c^{\vee \vee} \xrightarrow{ \id_{c^\vee} \otimes \varphi_c^{-1}} c^{\vee} \otimes c\xrightarrow{\ev_{c}} \mathbf{1} = d^L_c \cdot \id_\mathbf{1}
    \\
  &\mathbf{1} \xrightarrow{\coev_c} c \otimes c^\vee \xrightarrow{\varphi_c \otimes \id_{c^\vee}} c^{\vee \vee} \otimes c^\vee \xrightarrow{\ev_{c^\vee}} \mathbf{1} = d^R_c \cdot \id_\mathbf{1}
\end{align*}
for some scalars $d^L_c, d^R_c \in \C.$ Then $\lambda_c, \rho_c$ are the \textbf{left/right quantum dimensions} of $c,$ respectively, denoted $\dim_L^\varphi(c) \coloneq d^L_c,$ and $\dim_R^\varphi(c) \coloneq d^R_c.$
\end{defn}

 The monoidal category $\oVecc$ admits a unique pseudounitary\footnote{It is indeed unitary.} (quantum dimensions of objects are strictly positive) pivotal structure $\varphi_0: \id_{\oVecc} \to ()^{\vee \vee}$. In this pseudounitary category, the left/right quantum dimensions of a simple object $g$ are 
 \[
 \dim_L^{\varphi_0}(g) = \dim_R^{\varphi_0}(g) =1
 \]
 for all $g \in G.$
\comment{ 
 \[\tikzmath{
\draw[thick, blue] (0, .3) arc (180:0:.3)  --node[right] {$\scriptstyle g^{-1}$} (0.6, -0.3) arc(0:-180:0.3) node[left, yshift = -0.15cm] {$\scriptstyle g$};
\roundNbox{blue}{(0,0)}{.3}{0}{0}{$\varphi_g$};
}
=
\tikzmath{
\draw[thick, blue] (0, .3) arc (0:180:.3)  --node[left] {$\scriptstyle g^{-1}$} (-0.6, -0.3) arc(-180:0:0.3) node[right, yshift = -0.15cm] {$\scriptstyle g$};
\roundNbox{blue}{(0,0)}{.3}{0.05}{0.05}{$\varphi_g^{-1}$};
}
=1\]} \\

In general, for a tensor category $\cC$ with a chosen pivotal structure $\varphi: \id_{\cC} \to ()^{\vee \vee}$, the collection of all pivotal structures on $\cC$ are obtained by composition with monoidal natural transformations in $\Aut_\otimes(\id_{\cC})$: for each $\delta \in \Aut_\otimes(\id_{\cC}),$ $\varphi \circ \delta$ defines a pivotal structure. One can check that the data of a monoidal natural isomorphism $\delta$ on $\id_\cC$ is exactly the data of a group homomorphism $\delta : U_\cC \to \C^\times $ from the universal grading group $U_\cC$ to $\C^\times$ (see \cite[\S 4.14]{EGNO}). For $\cC=\oVecc,$ $U_\cC = G.$ If we let the pseudounitary structure $\varphi_0$ be represented by the trivial map in $\Hom(G \to \C^\times)$, then any group homomorphism $\pi \in \Hom(G \to \C^\times)$ specifies a pivotal structure on $\oVecc$. In pivotal category $\oVec$, the left/right quantum dimensions of a simple object $g$ are 
\begin{equation}
\dim_L^{\pi}(g) = \pi(g)^{-1},\ \dim_R^{\pi}(g) = \pi(g)
\label{eq: quantum dims of simple}
 \end{equation}
for all $g \in G.$ 
\comment{\[\tikzmath{
\draw[thick, blue] (0,0) -- (0,0.2);
\roundNbox{blue}{(0,0.5)}{0.3}{0}{0}{$\varphi_g$};
\draw[thick, blue] (0,0.8) arc(180:0:0.4) -- node[right] {$\scriptstyle g^{-1}$} (0.8, -0.6) ;
\draw[thick, blue] (0.8, -0.6) arc(0:-180:0.4);
\roundNbox{blue}{(0,-0.3)}{0.3}{0.2}{0.2}{$\pi(g)$}
} = \pi(g)\
\text{and}\
\tikzmath{
\draw[thick, blue] (0,0) -- (0,.2);
\roundNbox{blue}{(0,0.5)}{0.3}{.3}{.25}{$\pi(g^{-1})$};
\draw[thick, blue] (0,0.8) arc(0:180:0.4) -- node[left] {$\scriptstyle g^{-1}$} (-0.8, -0.6);
\draw[thick, blue] (-0.8,-0.6) arc(-180:0:0.4);
\roundNbox{blue}{(0, -0.3)}{0.3}{0.1}{0.1}{$\varphi_g^{-1}$};
}
= \pi(g^{-1})\]}

For pivotal category $\oVec$, there is a gauge freedom in choosing the duality data: one may rescale $\ev_g$, $\coev_g,$ and thus $\varphi_g$ all at the same time in defining the same pivotal category. For the convenience of later sections, we fix our choices 
\begin{equation}
\label{eq:duality-choice}
\operatorname{ev}_g=\operatorname{id}_{\mathbf 1},
\qquad
\operatorname{coev}_g=\omega(g,g^{-1},g)\operatorname{id}_{\mathbf 1},
\qquad
\varphi_g=\omega(g^{-1},g,g^{-1})\pi(g)\operatorname{id}_g,
\quad g\in G.
\end{equation}

\begin{defn} \label{pivfunctor}
    For a monoidal functor $(F,F^2): (\cC,\varphi^\cC) \to (D, \varphi^\cD)$ between pivotal categories, there is a canonical natural isomorphism $\delta_c: F(c^\vee) \to F(c)^\vee$ for each $c \in \cC$ given by
    \[\delta_c \coloneq
    \tikzmath{
    \roundNbox{blue}{(0,0)}{0.3}{.4}{.4}{$F^2_{c^\vee,c}$};
    \draw[thick,blue] (-.05,.3) -- (-.05,.9);
    \draw[thick,blue] (.05,.3) -- (.05,.9);
    \roundNbox{blue}{(0,1.2)}{0.3}{.4}{.4}{$F(\ev_c)$};
    \draw[thick,blue] (-.2,-.3) -- (-.2,-1.1);
    \draw[thick,blue] (.2,-.3) arc(-180:0:.6) -- (1.4,1.7);
    \node[blue] at (-.7,-.8) {$\scriptstyle F(c^\vee)$};
    \node[blue] at (-.7,.6) {$\scriptstyle F(c^\vee \otimes c)$};
    \node[blue] at (1.9,1.5) {$\scriptstyle F(c)^\vee$};
    }.\]
    We call $(F, F^2)$ a \textbf{pivotal functor} if 
    \[
    \delta_c^\vee \circ\varphi^\cD_{F(c)} = \delta_{c^\vee} \circ F(\varphi^\cC_c).
    \]
\end{defn}

\section{strictified $\oVec$}\label{sec: 3}

\begin{defn}
    A \textbf{strict pivotal category} is a pivotal strict monoidal category in which the tensorator of the dual functor $(()^\vee)^2_{a,b}: a^\vee \otimes_{\text{mop}} b^\vee \to (a \otimes b)^\vee$ and the pivotal structure $\varphi: \id \to ()^{\vee \vee}$ are both identities.
\end{defn}

\begin{prop} \cite[Thm 2.2.]{Ng_2007} 
Every pivotal category is equivalent, as a pivotal category, to a strict pivotal category.
\end{prop}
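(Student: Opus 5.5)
The plan is the standard strictification by a Yoneda-type or word construction, adapted to keep track of duals and the pivotal structure.

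\emph{Construction.} Given a pivotal category $(\cC,\varphi)$, let $\cC^{\mathrm{st}}$ be the category whose objects are finite words in the symbols $c^{+}$ and $c^{-}$ for $c\in\cC$, with concatenation as the tensor product and the empty word as the unit. Define an evaluation $E$ on words by $E(\emptyset)=\mathbf 1$, $E(c^+)=c$, $E(c^-)=c^\vee$, and $E(w_1w_2)=E(w_1)\otimes E(w_2)$, bracketed to the left. Set
\[
\Hom_{\cC^{\mathrm{st}}}(v,w)\coloneq\Hom_\cC(E(v),E(w)),
\]
with composition inherited from $\cC$. On morphisms, tensor product is conjugation by the canonical associator and unitor composites. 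Mac Lane coherence makes $\cC^{\mathrm{st}}$ strict monoidal. The functor $E$ is fully faithful by construction and essentially surjective because every $c$ equals $E(c^+)$, so it is a monoidal equivalence.

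\emph{Strict duals.} Define the dual of a word by reversing it and flipping signs: $(c_1^{\epsilon_1}\cdots c_n^{\epsilon_n})^\vee\coloneq c_n^{-\epsilon_n}\cdots c_1^{-\epsilon_1}$. This makes $w^{\vee\vee}=w$ and $(v\otimes w)^\vee=w^\vee\otimes v^\vee$ equal on the nose.
\begin{itemize}
\item For a generator $c^+$, take $\ev$ and $\coev$ from $\cC$.
\item For $c^-$, whose dual is $c^+$, use the duality between $c^\vee$ and $c$ obtained by twisting $\ev_{c^\vee},\coev_{c^\vee}$ with $\varphi_c$. That is, use the pairing $c\otimes c^\vee\to\mathbf 1$ given by $\ev_{c^\vee}\circ(\varphi_c\otimes\id)$, and its partner coevaluation.
\item For a general word, build the evaluation and coevaluation by nesting the caps and cups of its letters.
\end{itemize}
The dual functor's action on morphisms is the usual mate construction. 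Its tensorator is the identity because the nested caps are defined compatibly with concatenation, so this is essentially a definition.

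\emph{Pivotal structure and compatibility.} Check that the pivotal structure can be taken to be the identity. The identity $w\to w^{\vee\vee}=w$ is monoidal trivially. Its naturality is the content of the definition of pivotal structure: the double mate of $f$ must equal $f$. Unwinding the definitions, the double mate of $f$ in $\cC^{\mathrm{st}}$ is $\varphi_{b}^{-1}\circ f^{\vee\vee}\circ\varphi_a$ computed in $\cC$. This is because the caps for $c^-$ were twisted by $\varphi$. It equals $f$ by naturality of $\varphi$, and monoidality of $\varphi$ handles words of length greater than one.

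Finally, check that $E$ is a pivotal functor in the sense of Definition~\ref{pivfunctor}. The comparison map $\delta_w$ is the canonical map from $E(w^\vee)$ to $E(w)^\vee$, built from the tensorators of the dual functor of $\cC$ and the identity or $\varphi$ on letters. The required identity $\delta_c^\vee\circ\varphi^{\cD}_{F(c)}=\delta_{c^\vee}\circ F(\varphi^\cC_c)$ then reduces, letter by letter, to the definition of the twisted caps.

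\emph{Main obstacle.} The hardest step is the bookkeeping showing that the twisted duality on $c^-$ makes the zigzag relations hold and makes the double dual exactly the identity, rather than only up to $\varphi$. I would handle it first on single letters using the graphical calculus. I would then extend to words using monoidality of $\varphi$ and the compatibility $\varphi_{c^\vee}=(\varphi_c^\vee)^{-1}$, which holds in any pivotal category.
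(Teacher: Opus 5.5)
The paper gives no proof of this proposition; it only cites \cite{Ng_2007}, so there is no internal argument to compare against. Your word construction is the standard strictification argument, and it is correct. Its ingredients are letters $c^{\pm}$, hom-spaces $\Hom_\cC(E(v),E(w))$, the given duality on $c^{+}$, the $\varphi$-twisted duality on $c^{-}$, and nested cups and caps on words. All the steps you list go through.

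It is worth adjusting where you locate the difficulty. The zigzag identities for $c^{-}$ are not an obstacle. You are only transporting the duality $(c^{\vee\vee},\ev_{c^\vee},\coev_{c^\vee})$ along the isomorphism $\varphi_c$, so they hold automatically.

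The real content is one identity, and your two verifications (that $f^{\vee\vee}=f$, and that $E$ is pivotal) are the same check. Let $\delta_v\colon E(v^\vee)\to E(v)^\vee$ be the comparison map. Let $\psi_v$ denote the composite $(\delta_v^\vee)^{-1}\circ\delta_{v^\vee}\colon E(v)\to E(v)^{\vee\vee}$. Unwinding mates gives $E(f^{\vee\vee})=\psi_w^{-1}\circ E(f)^{\vee\vee}\circ\psi_v$ for $f\colon v\to w$. Pivotality of $E$, with the identity pivotal structure on $\cC^{\mathrm{st}}$, says exactly $\psi_v=\varphi_{E(v)}$. Once that holds, naturality of $\varphi$ gives $E(f^{\vee\vee})=E(f)$. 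Hence $f^{\vee\vee}=f$, because $E$ is the identity on hom-spaces.

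On letters, the identity holds directly. We have $\psi_{c^+}=\varphi_c$ by the twist, and $\psi_{c^-}=(\varphi_c^\vee)^{-1}=\varphi_{c^\vee}$ by the compatibility you quote. Both sides are compatible with concatenation, by monoidality of $\varphi$ and multiplicativity of $\delta$. So the identity holds on all words.

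This also tightens one formula. Your expression $\varphi_b^{-1}\circ f^{\vee\vee}\circ\varphi_a$ for the double mate is literally right only for words of positive letters. For words containing negative letters, the conjugating maps are a priori $\psi_v,\psi_w$. They coincide with $\varphi$ only after you invoke $\varphi_{c^\vee}=(\varphi_c^\vee)^{-1}$.
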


Let $F: \oVec \to \cD$ be a pivotal equivalence to $\cD$, a strict pivotal category. Write $F:\ g:=\C_g \mapsto g:=F(\C_g)$ on simples, and denote the dual of $g$ in $\cD$ by $g^*$, then $g^*$ is isomorphic to $g^{-1}$. Let $\cD_0$ denote the full subcategory of $\cD$ whose objects are words in $\{g,g^*\}$. Then $\cD = \vertchar{c}(\cD_0)$, the Cauchy completion of $\cD_0.$

\begin{lem}
    The following objects and morphisms generate $\cD_0$, since every morphism space in $\cD_0$ is 1-dimensional. 
\begin{enumerate}[label=\textup{(O\arabic*)}]
\item
\label{Obj:1}
For each $g \in G,$ $g \neq \mathbf{1},$ we have objects $g:\
    \tikzmath{
    \draw[thick, blue, ->](0,0) -- (0,.6);
    \draw[thick, blue](0,.6) --(0,.8);
    \node[blue] at (.3, 0.3) {$g$};
    }$
    and
    $g^*:\
    \tikzmath{
    \draw[thick, blue, ->](0,.8) -- (0,.5);
    \draw[thick, blue](0,.5) --(0,0);
    \node[blue] at (.3, 0.3) {$g^*$};
    }$. We use a dotted strand,  
    $\tikzmath{
    \draw[thick, blue, dotted](0,0) -- (0,.8);}$\ , 
    or empty strand to denote $\mathbf{1}.$
\end{enumerate}   

\begin{enumerate}[label=\textup{(M\arabic*)}]


\item 
\label{Mor: trivalent}
For each pair $g,h \in G$, choose an isomorphism
$\mu_{g,h}: gh \rightarrow g \otimes h$
$
\tikzmath{
\draw[thick, blue,-->-] (0,0) -- (-.4,.4) node[above]{$\scriptstyle g$};
\draw[thick,blue, -->-] (0,0) -- (.4,.4) node[above]{$\scriptstyle h$};
\draw[thick, blue, ->-] (0,-.4) node[below]{$\scriptstyle gh$} -- (0,0);
\filldraw[blue] (0,0) node[left]{$\scriptstyle \mu_{g,h}$} circle (.0cm);
}$ to be the inverse of the tensorator of $F$, and the tensorator denoted by
$\tikzmath{
    \draw[thick, blue, -->-] (0,0) -- node[above,yshift=.1cm] {$\scriptstyle gh$}(0,0.4);
    \draw[thick, blue, ->] (-0.4,-0.4) node[below] {$\scriptstyle g$} -- (-0.2,-0.2);
    \draw[thick, blue] (-0.2, -0.2) -- (0,0);
    \draw[thick, blue, ->] (0.4,-0.4) node[below, xshift=.1cm] {$\scriptstyle h$} -- (0.2,-0.2);
    \draw[thick, blue] (0.2, -0.2) -- (0,0);
     \node[blue] at (.5,0.1) {$\scriptstyle \mu_{g,h}^{-1}$};
    }$.
    When $h=g^{-1}$, we denote the tensorator by
    $\tikzmath{
\draw[thick, blue, ->-] (-.4,-.4) -- (0,0);
\draw[thick,blue,->-] (.4,-.4) -- (0,0); 
\node[blue] at (-0.6, -.3) {$\scriptstyle g$};
\node[blue] at (.7,-.3) {$\scriptstyle g^{-1}$};
}$.

\item 
\label{Mor: cup and cap}
For each pair of duals $g, g^*$, $g \neq \mathbf{1},$ we have isomorphisms $\coev_{g^*} $
$\tikzmath{
\draw[thick, blue, ->] (0,0) arc(-180:0:0.3);
\node[blue] at (-0.2, 0) {$\scriptstyle g^*$};
\node[blue] at (0.8, 0) {$\scriptstyle g$};
}$,
$\ev_{g^*}$
$\tikzmath{
\draw[thick, blue, ->] (0,0) arc(180:0:0.3);
\node[blue] at (-0.2, 0) {$\scriptstyle g$};
\node[blue] at (0.9, 0) {$\scriptstyle g^*$};
}$,
$\coev_{g}$
$\tikzmath{
\draw[thick, blue, <-] (0,0) arc(-180:0:0.3);
\node[blue] at (-0.2, 0) {$\scriptstyle g$};
\node[blue] at (0.8, 0) {$\scriptstyle g^*$};
}$,
$\ev_{g}$
$\tikzmath{
\draw[thick, blue, <-] (0,0) arc(180:0:0.3);
\node[blue] at (-0.25, 0) {$\scriptstyle g^*$};
\node[blue] at (0.9, 0) {$\scriptstyle g$};
}$
that witness duality.
\end{enumerate}   
\end{lem}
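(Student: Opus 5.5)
We need to show that every morphism in $\cD_0$ is a linear combination of composites and tensor products of the listed generators (and identities). Morphisms between objects of different total degree are zero, so it is enough to treat words of equal degree.

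\textbf{Step 1: Hom spaces.} Let $x=x_1\otimes\cdots\otimes x_n$ and $y=y_1\otimes\cdots\otimes y_m$ be words with $x_i,y_j\in\{g,g^*\}$. Let $\deg(x)\in G$ be the product of the degrees, where $g^*$ has degree $g^{-1}$ because $g^*\cong g^{-1}$. Then
\[
F^{-1}(x)\cong \C_{\deg(x)}, \qquad F^{-1}(y)\cong\C_{\deg(y)}
\]
in $\oVec$. Since $F$ is an equivalence,
\[
\dim\Hom_{\cD_0}(x,y)=\dim\Hom(\C_{\deg x},\C_{\deg y}),
\]
which is $1$ if $\deg x=\deg y$ and $0$ otherwise. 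So to generate $\Hom(x,y)$ when $\deg x=\deg y$, it suffices to build one nonzero morphism $x\to y$ from the generators. Every nonzero map between these simple objects is an isomorphism, so it is then a basis vector.

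\textbf{Step 2: reduction to the simple object $\deg(x)$.} Let $k=\deg(x)$. I would construct an isomorphism $\Phi_x: x\to k$, where $k$ is the single-strand object, or the empty word if $k=\mathbf{1}$. This is done by induction on the length $n$.
\begin{itemize}
\item First replace each $g^*$ strand by a $g^{-1}$ strand. The map $g^*\to g^{-1}$ is the composite
\[
g^* \xrightarrow{\ \id_{g^*}\otimes \mu_{g,g^{-1}}\ } g^*\otimes g\otimes g^{-1}\xrightarrow{\ \ev_g\otimes\id\ } g^{-1}.
\]
The first arrow needs $\mathbf{1}\to g\otimes g^{-1}$, and this is $\mu_{g,g^{-1}}$ read from the empty strand, which is legitimate by (M1) with $gh=\mathbf{1}$. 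The composite is an isomorphism because each factor is.
\item Then merge adjacent strands $a\otimes b$ using the tensorators $\mu_{a,b}^{-1}: a\otimes b\to ab$. If a product is $\mathbf{1}$, the merge caps the pair off to the empty strand.
\end{itemize}
Each step is a tensor product of identities with isomorphisms, so $\Phi_x$ is an isomorphism built from (M1)–(M2). Similarly construct $\Phi_y$. Then $\Phi_y^{-1}\circ\Phi_x: x\to y$ is a nonzero morphism, and by Step 1 it spans $\Hom(x,y)$. Its inverse is again a composite of generators, since the inverses of $\mu$, the cups and the caps are all in the list.

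\textbf{Expected obstacle.} The only delicate points are bookkeeping. One must treat the unit object correctly: the dotted strand is the empty word, since $\mathbf{1}$ is excluded from (O1). One must also check that the $\mu_{g,h}$ with $g$ or $h$ equal to $\mathbf{1}$, or with $gh=\mathbf{1}$, still make sense as morphisms between the objects of $\cD_0$. The remaining fact to verify is that every morphism used is genuinely invertible, so that nonvanishing is never in question; that follows because the tensorator of the equivalence $F$ is invertible and the duality maps are invertible in this pointed setting.
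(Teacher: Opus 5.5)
Your proposal is correct and takes the same approach as the paper: the paper's only justification is that Hom spaces in $\cD_0$ are one-dimensional, and you fill in what it leaves implicit, namely that Hom spaces between words of different degree are zero and that each nonzero Hom space contains a nonzero composite of generators, $\Phi_y^{-1}\circ\Phi_x$. One small imprecision: the inverses of the cups and caps are nonzero scalar multiples of the opposite caps and cups (e.g.\ $\ev_g^{-1}$ is a multiple of $\coev_{g^*}$, scaled by the bubble value), not literally elements of the list, but this does not affect spanning.
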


\vspace{.1cm}

\begin{lem}\label{lem:R1-R5}
The following relations hold in $\cD_0$, for all $(g,h,k) \in G^3$ and $g \in G$:

\begin{enumerate}[label=\textup{(R\arabic*)}]
\item
\label{R:associativity}
$\tikzmath{
\draw[thick, blue] (.3,.3) node[above]{$\scriptstyle k$} -- (0,0) -- (-.3,.3) node[above]{$\scriptstyle h$};
\draw[thick, blue] (0,0) -- (-.3,-.3) -- (-.9,.3) node[above]{$\scriptstyle g$};
\draw[thick, blue] (-.3,-.3) -- (-.3,-.6) node[below]{$\scriptstyle ghk$};
\filldraw[blue] (0,0) node[right]{$\scriptstyle \mu_{h,k}$} circle (.0cm);
\filldraw[blue] (-.3,-.3) node[right]{$\scriptstyle \mu_{g,hk}$} circle (.0cm);
}
=
\omega(g,h,k)
\tikzmath{
\draw[thick, blue] (-.3,.3) node[above]{$\scriptstyle g$} -- (0,0) -- (.3,.3) node[above]{$\scriptstyle h$};
\draw[thick, blue] (0,0) -- (.3,-.3) -- (.9,.3) node[above]{$\scriptstyle k$};
\draw[thick, blue] (.3,-.3) -- (.3,-.6) node[below]{$\scriptstyle ghk$};
\filldraw[blue] (0,0) node[left]{$\scriptstyle \mu_{g,h}$} circle (.0cm);
\filldraw[blue] (.3,-.3) node[left]{$\scriptstyle \mu_{gh,k}$} circle (.0cm);
}
$, for some $\ \omega \in [\omega]$.

\item 
\label{R: id}
 $\tikzmath{
\draw[thick, blue, ->] (0,0) -- (0,0.4);
\draw[thick, blue, ->] (-0.4, -0.4) -- (-0.2, -0.2);
\draw[thick, blue] (-.2,-.2) -- (0,0);
\draw[thick, blue, dotted] (0.4,-0.4) -- (0,0);
\node[blue] at (-0.2, 0.2) {$\scriptstyle g$};
}$
=
$\tikzmath{
\draw[thick, blue, ->](0,-0.4) -- (0,0.4);
\node[blue] at (0.2,0) {$\scriptstyle g$};
}$ and  
$\tikzmath{
\draw[thick, blue, ->] (0,0) -- (0,0.4);
\draw[thick, blue, ->] (0.4, -0.4) -- (0.2, -0.2);
\draw[thick, blue] (.2,-.2) -- (0,0);
\draw[thick, blue, dotted] (-0.4,-0.4) -- (0,0);
\node[blue] at (-0.2, 0.2) {$\scriptstyle g$};
}$
=
$\tikzmath{
\draw[thick, blue, ->](0,-0.4) -- (0,0.4);
\node[blue] at (0.2,0) {$\scriptstyle g$};
}.$

\vspace{.3cm}

\item 
\label{R:zigzag}
$\tikzmath{
\draw[thick, blue] (-.6,-.5) -- node[left]{$\scriptstyle g^*$} (-.6,0) arc(180:0:.3) ;
\draw[thick, blue, <-] (0,0) node[right] {$\scriptstyle g$} arc(-180:0:.3) -- node[right]{$\scriptstyle g^*$} (.6,.5);
}
=
\tikzmath{
\draw[thick, blue, ->] (0,.5) -- node[right]{$\scriptstyle g^*$} (0,0);
\draw[thick, blue] (0,0) -- (0,-.5);
},$
$\tikzmath{
\draw[thick, blue] (-.6,.5) -- node[left]{$\scriptstyle g$} (-.6,0) arc(-180:0:.3) ;
\draw[thick, blue, <-] (0,0) node[right] {$\scriptstyle g^*$} arc(180:0:.3) -- node[right]{$\scriptstyle g$} (.6,-.5);
}
=
\tikzmath{
\draw[thick, blue] (0,.5) -- node[right]{$\scriptstyle g$} (0,0);
\draw[thick, blue, <-] (0,0) -- (0,-.5);
},$
$\tikzmath{
\draw[thick, blue,->] (-.6,-.5) -- node[left]{$\scriptstyle g$} (-.6,0) arc(180:0:.3) ;
\draw[thick, blue] (0,0) node[right] {$\scriptstyle g^*$} arc(-180:0:.3) -- node[right]{$\scriptstyle g$} (.6,.5);
}=
\tikzmath{
\draw[thick, blue] (0,.5) -- node[right]{$\scriptstyle g$} (0,0);
\draw[thick, blue, <-] (0,0) -- (0,-.5);
},$
$\tikzmath{
\draw[thick, blue,->] (-.6,.5) -- node[left]{$\scriptstyle g^*$} (-.6,0) arc(-180:0:.3) ;
\draw[thick, blue] (0,0) node[right] {$\scriptstyle g$} arc(180:0:.3) -- node[right]{$\scriptstyle g^*$} (.6,-.5);
}
=
\tikzmath{
\draw[thick, blue,->] (0,.5) -- node[right]{$\scriptstyle g^*$} (0,0);
\draw[thick, blue] (0,0) -- (0,-.5);
}.$

\vspace{.3cm}

\item 
\label{R:bubble}
$\tikzmath{
\draw[thick, blue, <-] (-.3,0) arc (180:-180:.3cm);
\node[blue] at (-.5,0) {$\scriptstyle g^*$};
\node[blue] at (.5,0) {$\scriptstyle g$};
}
=
\pi(g^{-1})$
and
$\tikzmath{
\draw[thick, blue, ->] (-.3,0) arc (180:-180:.3cm);
\node[blue] at (-.5,0) {$\scriptstyle g$};
\node[blue] at (.5,0) {$\scriptstyle g^*$};
}
=
\pi(g)
$.

\vspace{.3cm}

\item 
\label{R: pivotal}
$\tikzmath{
\draw[thick, blue] (-.6,-.6) -- (-.4,-.4);
\draw[thick, blue, ->-] (-.4,-.4) -- (0,0);
\draw[thick,blue,->-] (.4,-.4) -- (0,0); \draw[thick,blue] (.4,-.4) arc(-135:0:.4);
\draw[thick, blue, ->] ({(4+sqrt(2))/5},.2) -- ({(4+sqrt(2))/5},{0.4*(1/sqrt(2)-1)});
\node[blue] at (-0.6, -.3) {$\scriptstyle g^{-1}$};
\node[blue] at (.1,-.4) {$\scriptstyle g$};
\node[blue] at (1.4,.2) {$\scriptstyle g^*$};
}
=
\omega(g^{-1}, g, g^{-1}) \pi(g) 
\tikzmath{
\draw[thick, blue] (.6,-.6) -- (.4,-.4);
\draw[thick, blue, ->-] (-.4,-.4) -- (0,0);
\draw[thick,blue,->-] (.4,-.4) -- (0,0); \draw[thick,blue] (-.4,-.4) arc(-45:-180:.4);
\draw[thick, blue, ->] (-{(4+sqrt(2))/5},.2) -- (-{(4+sqrt(2))/5},{0.4*(1/sqrt(2)-1)});
\node[blue] at (0.75, -.3) {$\scriptstyle g^{-1}$};
\node[blue] at (-.1,-.4) {$\scriptstyle g$};
\node[blue] at (-1.3,.2) {$\scriptstyle g^*$};
}
$
\end{enumerate}  

\comment{
Moreover, we may assume the tensorators are trivial when  $(g,h)= (\mathbf{1},g)$ or $(g,\mathbf{1})$}

\comment{
\item 
\label{R: replacement}
$\tikzmath{
\draw[thick, blue, <-] (0,0) arc(180:0:0.3);
\node[blue] at (-0.2, 0) {$\scriptstyle g^*$};
\node[blue] at (0.9, 0) {$\scriptstyle g$};
}$
= 
$\tikzmath{
\draw[thick, blue,dotted] (0,0) -- (0,0.3);
\draw[thick, blue,-<-] (0,0) --  (-0.4, -0.4);
\draw[thick, blue,->-](0.4,-0.9) to (0,0);
\draw[thick, blue,->] (-0.4,-0.4) -- (-0.4, -0.9);
\filldraw[blue] (-.4,-.4) node[left]{$\scriptstyle \delta_g$} circle (.05cm);
\node[blue] at (-0.6, -0.7) {$\scriptstyle g^*$};
\node[blue] at (-0.35,-0.05) {$\scriptstyle g^{-1}$};
\node[blue] at (.55,-.8) {$\scriptstyle g $};
}$.}

\end{lem}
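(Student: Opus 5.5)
We have to prove Lemma~\ref{lem:R1-R5}: relations \ref{R:associativity}--\ref{R: pivotal} hold in the strict pivotal category $\cD_0$. The approach is to transport each relation from the skeletal category $\oVec$, with the duality data fixed in \eqref{eq:duality-choice}, through the pivotal equivalence $F$. Throughout we use the tensorator $F^2$ (so $\mu_{g,h}=(F^2_{g,h})^{-1}$), the canonical isomorphisms $\delta_g\colon F(g^{-1})\to g^*$ of Definition~\ref{pivfunctor}, and the fact that $\cD$ is strict with identity pivotal structure.

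\textbf{Relations \ref{R:associativity}, \ref{R: id}, \ref{R:zigzag}.} For \ref{R:associativity}, we write the monoidality axiom of $F$ in the strict target:
\[
F^2_{g,hk}\circ(\id_g\otimes F^2_{h,k}) = F(\alpha_{g,h,k})\circ F^2_{gh,k}\circ(F^2_{g,h}\otimes \id_k),
\qquad F(\alpha_{g,h,k})=\omega(g,h,k)\id_{ghk}.
\]
Taking inverses gives the stated identity between the two trivalent trees, up to the convention on the direction of $\alpha$. The phrase ``for some $\omega\in[\omega]$'' absorbs any rechoice of the $\mu$'s, since rescaling $\mu$ by a 2-cochain changes $\omega$ by $\partial\mu$. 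For \ref{R: id}, we first replace $\omega$ by a cohomologous normalized cocycle, so that $\lambda_g,\rho_g$ are identities. We then use the unit axioms of $F$ together with $F(\mathbf 1)=\mathbf 1$ to rescale $\mu_{\mathbf 1,g},\mu_{g,\mathbf 1}$ so that they are identities; this is a coboundary adjustment compatible with \ref{R:associativity}. Relation \ref{R:zigzag} is just the statement that $\ev,\coev$ in the strict pivotal $\cD$ witness duality, and it holds by definition.

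\textbf{Relation \ref{R:bubble}.} In a strict pivotal category the two bubbles equal $\dim_L(g)$ and $\dim_R(g)$, because $\varphi=\id$. A pivotal equivalence preserves quantum dimensions. Equation \eqref{eq: quantum dims of simple} then gives $\pi(g)^{-1}=\pi(g^{-1})$ for the clockwise bubble and $\pi(g)$ for the counterclockwise one. One should check which orientation corresponds to $\dim_L$ versus $\dim_R$, but otherwise this step is immediate.

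\textbf{Relation \ref{R: pivotal}, the main obstacle.} Both sides are maps $F(g^{-1})\to g^*$, and each is built from $\mu_{g^{-1},g}^{-1}$ or $\mu_{g,g^{-1}}^{-1}$ composed with a cup. We identify the left side with $\delta_g$ as defined in Definition~\ref{pivfunctor}, using $\ev_g=\id$ from \eqref{eq:duality-choice}. The right side, after rotation, is $\delta_{g^{-1}}^{\vee}$ precomposed with the appropriate identifications. The pivotal functor condition $\delta_g^\vee\circ\varphi^{\cD}_{F(g)}=\delta_{g^\vee}\circ F(\varphi_g)$, combined with $\varphi^\cD=\id$, then yields the scalar $\varphi_g=\omega(g^{-1},g,g^{-1})\pi(g)$.

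The careful part is the bookkeeping. The duality maps of $\cD$ and the images under $F$ of $\coev_g=\omega(g,g^{-1},g)$ must be matched, and this matching will introduce an extra associator factor. That factor must cancel or combine correctly to leave exactly $\omega(g^{-1},g,g^{-1})\pi(g)$. I would first compute both sides as scalars times a fixed basis map $F(g^{-1})\to g^*$, using one-dimensionality of hom-spaces in $\cD_0$. I would then verify the scalar by a zigzag argument: compose each side with a cap and compare against the bubble values from \ref{R:bubble} and associativity \ref{R:associativity}.
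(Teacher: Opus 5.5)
Your proposal is correct and follows the paper's proof item by item: (R1) from the monoidality axiom of $F$ with strict target, (R2) by normalizing the unit tensorators, (R3) from the definition of the duality data, (R4) from preservation of quantum dimensions, and (R5) by unpacking the pivotality of $F$. In fact the right-hand picture of (R5) is exactly $\delta_{g^{-1}}^\vee$ (using $\ev_{g^{-1}}=\id_{\mathbf 1}$), so dualizing $\delta_g^\vee=\omega(g^{-1},g,g^{-1})\pi(g)\,\delta_{g^{-1}}$ in the strict category gives (R5) at once, and no factor from $\coev_g=\omega(g,g^{-1},g)\id_{\mathbf 1}$ ever enters.

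Two small bookkeeping slips need fixing. First, with the paper's convention $\alpha_{a,b,c}\colon a\otimes(b\otimes c)\to(a\otimes b)\otimes c$, the axiom reads $F(\alpha_{g,h,k})\circ F^2_{g,hk}\circ(\id_g\otimes F^2_{h,k})=F^2_{gh,k}\circ(F^2_{g,h}\otimes\id_k)$; your placement of $F(\alpha)$ would give $\omega^{-1}$, which need not lie in $[\omega]$. Second, the bubble equal to $\pi(g)$ is $\ev_{g^*}\circ\coev_g=\dim_R(g)$, which runs upward along its left side, i.e.\ it is the clockwise bubble rather than the counterclockwise one.
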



\begin{proof}
\ref{R:associativity} Recall that $F: \oVec \to \cD$ is a monoidal functor, and that we chose $\mu_{g,h}: gh \to g \otimes h $ to be the inverse of the tensorator of $F.$ Then \ref{R:associativity} follows from monoidality of $F$ and strictness of $\cD$. \\

\ref{R: id} We may assume the tensorators are trivial when  $(g,h)= (\mathbf{1},g)$ or $(g,\mathbf{1})$.\\

\ref{R:zigzag} This is immediate from our choice of cups and caps. \\

\ref{R:bubble} The left/right quantum dimensions of $g$ in skeletal $\oVec$ are given by \eqref{eq: quantum dims of simple}. By \cite[lemma 2.14]{penneys2018unitarydualfunctorsunitary}, $F$ preserves quantum dimensions. Hence \ref{R:bubble} hold. The right quantum dimensions
$\tikzmath{
\draw[thick, blue, ->] (-.3,0) arc (180:-180:.3cm);
\node[blue] at (-.5,0) {$\scriptstyle g$};
\node[blue] at (.5,0) {$\scriptstyle g^*$};
}$ $= \pi(g)$ of all $g \in G$ imply their left quantum dimensions, since $ \dim_L(g) = \dim_R(g^*) = \dim_R(g^{-1})$.\\

\ref{R: pivotal} This unpacks the pivotality condition of $F$ in Definition \ref{pivfunctor}, assuming our duality and pivotal isomorphism choices as in \eqref{eq:duality-choice}. 
\end{proof}
\begin{rem}
Notice that in our case, pivotality  condition \ref{R: pivotal} is generated by \ref{R:associativity} -- \ref{R:bubble}:

Use \ref{R:associativity} on $(g,g^{-1},g)$ and \ref{R: id} to deduce
\[
\tikzmath{
\draw[thick, blue, dotted] (0,0) -- (0,.4);
\draw[thick, blue,->](0,.4) -- node[left,xshift=-.1cm]{$\scriptstyle g$} (-.4,.8);
\draw[thick, blue] (-.4,.8) -- (-.6,1);
\draw[thick, blue,->] (0,.4)-- (.4,.8) node[left,xshift = .2cm, yshift=.2cm]{$\scriptstyle g^{-1}$};
\draw[thick, blue ] (.4,.8) -- (.8,1.2);
\draw[thick, blue] (.8,1.2)--(1,1);
\draw[thick, blue,<-] (1,1) -- node[right, xshift=-.1cm, yshift =.1cm]{$\scriptstyle g$} (1.4,0.6);
\draw[thick, blue, dotted] (.8,1.2)--(.8, 1.6);
} = \omega(g^{-1},g,g^{-1})\
\tikzmath{
\draw[thick, blue, ->-] (0,0) -- (0,1) node[right]{$\scriptstyle g$};
}.
\]
Compose with $\coev_g$ and $\ev_{g^*}$ to get closed loops
\[
\tikzmath{
\draw[thick, blue, ->-] (0,0) -- (0,.8) node[left,xshift = 0cm, yshift=.1cm]{$\scriptstyle g$};
\draw[thick, blue,->-] (0,0)-- (.8,.8) node[left,xshift = -.0cm, yshift=0cm]{$\scriptstyle g^{-1}$};
\draw[thick, blue, ->-] (0.8,0.2) -- (0.8,.8) node[right,xshift = 0cm, yshift=-.2cm]{$\scriptstyle g$};
\draw[thick,blue] (.8,0.2) arc(-180:0:.3cm);
\draw[thick, blue, -<-] (1.4,0.2) -- (1.4,.8) node[right,xshift = 0cm, yshift=-.2cm]{$\scriptstyle g^*$};
\draw[thick,blue] (1.4,.8) arc(0:180:.7cm);
}
=
\omega(g^{-1},g,g^{-1}) 
\tikzmath{
\draw[thick, blue, ->] (-.3,0) arc (180:-180:.3cm);
\node[blue] at (-.5,0) {$\scriptstyle g$};
\node[blue] at (.5,0) {$\scriptstyle g^*$};
}
= \omega(g^{-1},g,g^{-1}) \pi(g).
\]
Precompose with $\mu_{g,g^{-1}}^{-1}$ on both sides to get
\[
\tikzmath{
\draw[thick, blue, ->-] (-.4,0) -- (-.4,.8) node[left,xshift = 0cm, yshift=.1cm]{$\scriptstyle g$};
\draw[thick, blue,->-] (0,0)-- (.8,.8) node[left,xshift = -.0cm, yshift=0cm]{$\scriptstyle g^{-1}$};
\draw[thick, blue, ->-] (0.8,0.2) -- (0.8,.8) node[right,xshift = 0cm, yshift=-.2cm]{$\scriptstyle g$};
\draw[thick,blue] (.8,0.2) arc(-180:0:.3cm);
\draw[thick, blue, -<-] (1.4,0.2) -- (1.4,.8) node[right,xshift = 0cm, yshift=-.2cm]{$\scriptstyle g^*$};
\draw[thick,blue] (1.4,.8) arc(0:180:.9cm);
}
= \omega(g^{-1},g,g^{-1}) \pi(g)\
\tikzmath{
\draw[thick,blue,->-] (0,0) -- (.6,.6) node[left,xshift = -.2cm, yshift=-.8cm]{$\scriptstyle g$};
\draw[thick,blue,->-] (1.2,0) -- (.6,.6) node[right,xshift = .2cm, yshift=-.8cm]{$\scriptstyle g^{-1}$};
}.
\]
Finally, compose with $\coev_{g^*}$ on both sides, and by \ref{R:zigzag}, this is equivalent to
\[
\tikzmath{
\draw[thick, blue] (-.6,-.6) -- (-.4,-.4);
\draw[thick, blue, ->-] (-.4,-.4) -- (0,0);
\draw[thick,blue,->-] (.4,-.4) -- (0,0); \draw[thick,blue] (.4,-.4) arc(-135:0:.4);
\draw[thick, blue, ->] ({(4+sqrt(2))/5},.2) -- ({(4+sqrt(2))/5},{0.4*(1/sqrt(2)-1)});
\node[blue] at (-0.6, -.3) {$\scriptstyle g^{-1}$};
\node[blue] at (.1,-.4) {$\scriptstyle g$};
\node[blue] at (1.4,.2) {$\scriptstyle g^*$};
}
=
\omega(g^{-1}, g, g^{-1}) \pi(g) 
\tikzmath{
\draw[thick, blue] (.6,-.6) -- (.4,-.4);
\draw[thick, blue, ->-] (-.4,-.4) -- (0,0);
\draw[thick,blue,->-] (.4,-.4) -- (0,0); \draw[thick,blue] (-.4,-.4) arc(-45:-180:.4);
\draw[thick, blue, ->] (-{(4+sqrt(2))/5},.2) -- (-{(4+sqrt(2))/5},{0.4*(1/sqrt(2)-1)});
\node[blue] at (0.75, -.3) {$\scriptstyle g^{-1}$};
\node[blue] at (-.1,-.4) {$\scriptstyle g$};
\node[blue] at (-1.3,.2) {$\scriptstyle g^*$};
}.
\]
In fact, given a monoidal functor $F$ from a pointed pivotal category $\cX$ to any pivotal category $\cY,$ if $F$ preserves quantum trace, then $F$ is pivotal. Notice that the proof for ``pivotal functors preserve quantum traces" \cite[Lemma 2.14]{penneys2018unitarydualfunctorsunitary} is an if and only if statement when the source is pointed.

\end{rem}


\begin{lem}\label{lem: replacements}
   Define isomorphisms
   \begin{enumerate}[label=\textup{(M\arabic*)}]\addtocounter{enumi}{+2}
   \item
   \label{Mor: dot}
   $\delta_g = 
   \tikzmath{
    \draw[thick, blue, ->] (0, -0.4) node[below] {$\scriptstyle g^{-1}$} -- (0,-0.1);
    \draw[thick, blue] (0,-.1) -- (0,0);
    \draw[thick, blue, ->] (0, .4) node[above] {$\scriptstyle g^{*}$} -- (0,.1);
    \draw[thick, blue] (0,.1) -- (0,0);
    \filldraw[blue] (0,0) node[left]{$\scriptstyle \delta_{g}$} circle (.05cm);
    }
    \coloneq
    \tikzmath{
    \draw[thick, blue] (-.6,-.6) -- (-.4,-.4);
    \draw[thick, blue, ->-] (-.4,-.4) -- (0,0);
    \draw[thick,blue,->-] (.4,-.4) -- (0,0); \draw[thick,blue] (.4,-.4) arc(-135:0:.4);
    \draw[thick, blue, ->] ({(4+sqrt(2))/5},.2) -- ({(4+sqrt(2))/5},{0.4*(1/sqrt(2)-1)});
    \node[blue] at (-0.6, -.3) {$\scriptstyle g^{-1}$};
    \node[blue] at (.1,-.4) {$\scriptstyle g$};
    \node[blue] at (1.4,.2) {$\scriptstyle g^*$};
    }$ for every $g \in G$ to be the canonical isomorphism in Definition \ref{pivfunctor}. Let $
    \tikzmath{
    \draw[thick, blue, ->] (0,0) -- (0,-.4) node[below]{$\scriptstyle g^*$};
    \draw[thick,blue, ->] (0,0) -- (0,.4) node[above]{$\scriptstyle g^{-1}$};
    \filldraw[blue] (0,0) node[left]{$\scriptstyle (\delta_{g})^{-1}$} circle (.05cm);
    }$ denote its inverse.
    \end{enumerate}
    Then the following 4 replacement relations can be deduced:

\begin{enumerate}[label=\textup{(R\arabic*)}]\addtocounter{enumi}{+5}

\item
\label{R: replacement}
$\tikzmath{
\draw[thick, blue, <-] (0,0) arc(180:0:0.3);
\node[blue] at (-0.2, 0) {$\scriptstyle g^*$};
\node[blue] at (0.9, 0) {$\scriptstyle g$};
}$
= 
$\tikzmath{
\draw[thick, blue,dotted] (0,0) -- (0,0.3);
\draw[thick, blue,-<-] (0,0) --  (-0.4, -0.4);
\draw[thick, blue,->-](0.4,-0.9) to (0,0);
\draw[thick, blue,->] (-0.4,-0.4) -- (-0.4, -0.9);
\filldraw[blue] (-.4,-.4)  circle (.05cm);
\node[blue] at (-0.6, -0.7) {$\scriptstyle g^*$};
\node[blue] at (-0.35,-0.05) {$\scriptstyle g^{-1}$};
\node[blue] at (.55,-.8) {$\scriptstyle g $};
}$.

\item \label{R:replacement2}
$\tikzmath{
    \draw[thick, blue, ->] (0,0) node[right] {$\scriptstyle g^*$} arc(0:-180:.3) node[left] {$\scriptstyle g$};
    }
    = \omega(g, g^{-1},g) 
    \tikzmath{
    \draw[thick, blue, ->] (0,0) -- node[right] {$\scriptstyle g^{-1}$} (.2,.2);
    \draw[thick, blue] (.2,.2) -- (.3,.3);
    \draw[thick, blue, -->-] (0,0) --  (-.5,.8) node[above] {$\scriptstyle g$};
    \draw[thick, blue, dotted] (0,0) -- (0, -.3);
    \draw[thick, blue] (.3,.3) -- (.3, .5);
    \draw[thick, blue, ->] (.3, .8) node[above] {$\scriptstyle g^*$} -- (.3,.5);
    \filldraw[blue] (0.3,0.3) circle (.05cm);
    }$

\vspace{.2cm}

    \item \label{R:replacement3}
    $\tikzmath{
\draw[thick, blue, ->] (0,0) arc(180:0:0.3);
\node[blue] at (-0.2, 0) {$\scriptstyle g$};
\node[blue] at (0.9, 0) {$\scriptstyle g^*$};
}$
= $\frac{\pi(g)}{\omega(g,g^{-1},g)}$
$\tikzmath{
\draw[thick, blue,dotted] (0,0) -- (0,0.3);
\draw[thick, blue] (0,0) -- (.1,-.1);
\draw[thick, blue, ->-](-.5,-.8) -- node[left]{$\scriptstyle g$} (0,0);
\draw[thick, blue, ->] (.3, -.3) -- node[right, yshift=.1cm]{$\scriptstyle g^{-1}$} (.1,-.1);
\draw[thick, blue, ->](.3,-.3) -- node[right]{$\scriptstyle g^*$} (.3, -.6);
\filldraw[blue] (.3,-.3) circle (.05cm);
\draw[thick, blue](.3,-.6) -- (.3,-.8);
}$

\item \label{R:replacement4}
$\tikzmath{
    \draw[thick, blue, ->] (0,0) arc(-180:0:.3);
    \node[blue] at (-.2,0) {$\scriptstyle g^*$};
    \node[blue] at (.9,0) {$\scriptstyle g$};
    }
    =
    \pi(g^{-1})
    \tikzmath{
    \draw[thick, blue, -->-] (0,0) -- node[left, yshift=-.1cm] {$\scriptstyle g^{-1}$} (-.3,.3);
    \draw[thick, blue, -->-] (0,0) -- node[right] {$\scriptstyle g$} (.5,.8);
    \draw[thick, blue, dotted] (0,0) -- (0, -.3);
    \draw[thick, blue, -<-] (-.3, .3) -- node[left, yshift=.2cm] {$\scriptstyle g^*$} (-.3, .8);
    \filldraw[blue] (-.3,.3) circle (.05cm);
    }$

\end{enumerate}
\end{lem}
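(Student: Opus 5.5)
Each of \ref{R: replacement}--\ref{R:replacement4} will be obtained by composing the defining picture of $\delta_g$ in \ref{Mor: dot} with cups, caps and trivalent vertices. We then simplify using \ref{R:associativity}, \ref{R: id}, \ref{R:zigzag}, \ref{R:bubble} and \ref{R: pivotal}. Since every morphism space in $\cD_0$ is one-dimensional, an alternative check is available for every equation: both sides are multiples of a single basis morphism, and it suffices to compare scalars. Before starting, we record that $\mu_{g,g^{-1}}$ and its inverse (the vertex into the dotted strand) are mutually inverse isomorphisms $\mathbf{1}\leftrightarrow g\otimes g^{-1}$. Here we use \ref{R: id} to identify strands labelled $\mathbf 1$ with the empty strand.

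For \ref{R: replacement}, we attach the $g^{-1}$ leg of the trivalent vertex $g^{-1}\otimes g\to \mathbf{1}$ to $\delta_g$. We then unfold $\delta_g$ by its definition: a $g^{-1}\otimes g\to\mathbf 1$ vertex whose $g$ leg is bent down by the cap $\ev_g$ (with the $\coev_{g^*}$ cup on the other strand). The picture then contains $\mu_{g^{-1},g}$ followed by $\mu^{-1}_{g^{-1},g}$ stacked on the $g^{-1}\otimes g$ strands; this is the identity, which leaves only the cup–cap. The zigzag relation \ref{R:zigzag} straightens it to the cap $\ev_g$. This step should be purely topological and scalar-free, which agrees with the statement.

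For \ref{R:replacement4}, we apply \ref{R: pivotal}, which expresses $\delta_g$ as $\omega(g^{-1},g,g^{-1})\pi(g)$ times the mirrored picture, with the cup on the other side. We precompose with the vertex $\mathbf 1\to g^{-1}\otimes g$ on the $g^{-1}$ leg and cancel as before. This yields the cup $\coev_{g^*}$ up to the scalar $\omega(g^{-1},g,g^{-1})\pi(g)$. We also need the associativity factor from \ref{R:associativity} applied to $(g^{-1},g,g^{-1})$, which rebrackets the vertex so that it can be cancelled. The two $\omega$ factors should cancel and leave $\pi(g)^{-1}=\pi(g^{-1})$. Relations \ref{R:replacement2} and \ref{R:replacement3} are the analogous statements for the oppositely oriented cup and cap, $\coev_g$ and $\ev_{g^*}$. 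For these we first write $\coev_g$ and $\ev_{g^*}$ as rotations of $\ev_g$ and $\coev_{g^*}$ using \ref{R:zigzag}. We then substitute \ref{R: replacement} and \ref{R:replacement4}, and straighten the resulting vertex using \ref{R:associativity} on $(g,g^{-1},g)$, together with the identity derived in the preceding Remark. That identity produces the factor $\omega(g,g^{-1},g)^{\pm1}$. For \ref{R:replacement3}, closing the diagram with a bubble and using \ref{R:bubble} contributes the additional $\pi(g)$.

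The main obstacle is scalar bookkeeping rather than any conceptual difficulty. Specifically, one must apply the correct instance of \ref{R:associativity} with the correct orientation, and distinguish $\omega(g,g^{-1},g)$ from $\omega(g^{-1},g,g^{-1})$; these two are related through the cocycle condition together with the normalization implicit in \ref{R: id}. To guard against sign and inverse errors, we will check each relation independently by closing both sides into a loop, for example by capping \ref{R:replacement3} with $\coev_g$. Both sides then reduce to scalars computable from \ref{R:bubble} and the Remark, and one-dimensionality of the Hom spaces makes this scalar comparison sufficient.
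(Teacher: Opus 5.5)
\textbf{Verdict: genuine gap.} Your overall route is the paper's: each of (R6)--(R9) comes from unfolding the definition of $\delta_g$ and using (R1)--(R4), the identity in the Remark after Lemma~\ref{lem:R1-R5}, and (R5). The scalars you aim for are correct. However, your (R6) step fails as written.

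The dot in (R6) has $g^*$ below and $g^{-1}$ above, so it is $\delta_g^{-1}$, not $\delta_g$. Moreover, $\delta_g=(\mu^{-1}_{g^{-1},g}\otimes\id_{g^*})\circ(\id_{g^{-1}}\otimes\coev_g)$ is a single cup $\coev_g$ feeding a merging vertex. It contains no $\ev_g$, no $\coev_{g^*}$ and no splitting vertex $\mu_{g^{-1},g}$. So the cancellation of ``$\mu_{g^{-1},g}$ followed by $\mu^{-1}_{g^{-1},g}$'' that you describe never occurs. What works, and is indeed scalar-free as you predicted, is to precompose with $\delta_g\otimes\id_g$:
\[
\ev_g\circ(\delta_g\otimes\id_g)=\mu^{-1}_{g^{-1},g}\circ\bigl(\id_{g^{-1}}\otimes(\id_g\otimes\ev_g)(\coev_g\otimes\id_g)\bigr)=\mu^{-1}_{g^{-1},g}
\]
by (R3). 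Then compose with $\delta_g^{-1}\otimes\id_g$.

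Second, your fallback of comparing scalars via one-dimensionality of the Hom spaces of $\cD_0$ cannot serve as a proof. It only shows that (R6)--(R9) hold in $\cD_0$. The lemma must instead produce consequences of (R1)--(R4) in $\cC(X,F)/\cR$, because Proposition~\ref{lem:g_to_g} uses them there to \emph{prove} one-dimensionality. Closing a diagram up is harmless only if you close with something already invertible in the presentation, such as the generator $\mu_{g,g^{-1}}$ in the Remark.

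With that restriction the remaining relations are short:
\begin{enumerate}
\item[(R7)] Unfolding $\delta_g$ gives
\[
(\id_g\otimes\delta_g)\mu_{g,g^{-1}}=\bigl((\id_g\otimes\mu^{-1}_{g^{-1},g})(\mu_{g,g^{-1}}\otimes\id_g)\otimes\id_{g^*}\bigr)\coev_g=\omega(g,g^{-1},g)^{-1}\coev_g
\]
by (R1) on $(g,g^{-1},g)$ and (R2). This is (R7).
\item[(R8)] Apply $\ev_{g^*}$ to the identity above and use (R4). Then cancel $\mu_{g,g^{-1}}$ and $\delta_g$.
\item[(R9)] Your argument via (R5) works, but the instance of (R1) that occurs is $(g,g^{-1},g)$, not $(g^{-1},g,g^{-1})$. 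On the $g$-leg of $\coev_{g^*}$ one meets $(\mu^{-1}_{g,g^{-1}}\otimes\id_g)(\id_g\otimes\mu_{g^{-1},g})=\omega(g,g^{-1},g)\id_g$. Together with $\omega(g,g^{-1},g)\,\omega(g^{-1},g,g^{-1})=1$, this gives $(\delta_g\otimes\id_g)\mu_{g^{-1},g}=\pi(g)\coev_{g^*}$.
\end{enumerate}

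Finally, $\delta_g^{-1}$ is not a generator, so you should exhibit it in the presentation. For example, $\pi(g)^{-1}(\id_{g^{-1}}\otimes\ev_{g^*})(\mu_{g^{-1},g}\otimes\id_{g^*})$ is a left inverse by (R4), and a right inverse by (R5), (R1), (R3) and the same cocycle identity.
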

\begin{proof}
They all follow promptly from the definition of $\delta_g$ and relations \ref{R:associativity} -- \ref{R:bubble}.
\end{proof}

\section{A presentation of the strictified $\oVec$ }
\begin{defn} (c.f. \cite[2.1]{etingof2019semisimplificationtensorcategories})
   For $\cC$ a linear monoidal category, a \textbf{tensor ideal} $I$ in $\cC$ is a collection of subspaces $I(x,y) \subset \cC(x,y)$ for all $x,y \in \cC$ such that for all $x,y,z,t \in \cC$:
   \begin{enumerate}
       \item \label{i1} for $f \in I(x,y)$ and $g \in \cC(y,z)$, $h \in \cC(z,x)$, we have $f \circ h \in I (z,y)$ and $g \circ f \in I(x,z)$;
       \item \label{i2} for $f \in I(x,y),$ $g \in \cC(z,t)$, we have $f \otimes g \in I(x \otimes z, y \otimes t)$ and $g \otimes f \in I(z \otimes x , t \otimes y)$.
   \end{enumerate}
   When $I$ is a tensor ideal, the quotient $\cC/I$ defines a linear monoidal category: objects of $\cC/I$ are the objects of $\cC;$ morphism space $\cC/I(x,y) \coloneq \cC(x,y)/I(x,y);$ composition and tensor product of morphisms are inherited from $\cC.$
\end{defn}

\begin{defn} (c.f. \cite[2.2]{Daniel-Cain})
    A \textbf{presentation} of a pivotal category $\cC$ is a set of generating objects $X$, generating morphisms $F$ between them, and relations $R$ satisfied in $\cC$ such that 
    \[ \cC \simeq \vertchar{c}(\cC(X,F)/\cR),\]
     where $\cC(X,F)$ is the free strict pivotal category generated by objects $X$ and morphisms $F$, $\cR$ is the smallest tensor ideal of $\cC(X,F)$ containing $R$, and $\vertchar{c}(\cC)$ denotes the Cauchy completion of a category $\cC.$ 
\end{defn}

\begin{thm}\label{mainthm}
Take $X$ to be the set of formal objects \ref{Obj:1}, $F$ to be the set of isomorphisms \ref{Mor: trivalent} -- \ref{Mor: cup and cap}, and $R$ to be the relations \ref{R:associativity} -- \ref{R:bubble}. Then $\cC(X,F)/\cR \cong \cD_0$, i.e., $(X,F,R)$ is a presentation of $\oVec.$
\end{thm}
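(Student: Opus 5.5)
There is a natural strict pivotal functor $\Phi:\cC(X,F)\to\cD_0$. It sends the formal objects $g,g^*$ to $F(\C_g)$ and its dual, the generators $\mu_{g,h}$ to the inverse tensorators, and the cups and caps to the chosen duality data. By Lemma \ref{lem:R1-R5}, relations \ref{R:associativity} -- \ref{R:bubble} hold in $\cD_0$, so $\Phi$ kills $\cR$ and descends to a functor $\bar\Phi:\cC(X,F)/\cR\to\cD_0$. This functor is bijective on objects, since both sides have as objects the words in $\{g,g^*\}$. Every hom space of $\cD_0$ is one-dimensional when the group products of source and target agree (reading $g^*$ as $g^{-1}$), and zero otherwise. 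Every nonzero morphism is a nonzero scalar times a composite of generators, by the generation lemma, so $\bar\Phi$ is full. The whole content is therefore faithfulness. I would prove that each hom space of the quotient $\cC(X,F)/\cR$ between words $x,y$ is spanned by a single fixed ``standard'' diagram $s_{x,y}$, which is nonzero in $\cD_0$ when the products agree. When the products differ the space must be $0$, but a free diagram between such words cannot exist: every generator preserves the group product, so these hom spaces are already $0$ in the free category.

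To reduce an arbitrary diagram, I would first eliminate starred strands. Using the replacement relations \ref{R: replacement} -- \ref{R:replacement4} of Lemma \ref{lem: replacements}, which were derived from \ref{R:associativity} -- \ref{R:bubble} alone and so hold in the quotient, every cup and cap can be rewritten in terms of trivalent vertices, the dotted unit strand, and the isomorphisms $\delta_g^{\pm1}$ at the boundary. Here $\delta_g$ is itself defined by a cap and a trivalent vertex, and $\delta_g\delta_g^{-1}$ collapses via \ref{R:zigzag}. After this rewriting, the interior of any closed diagram, and any diagram read as a map $x\to y$, becomes a planar trivalent graph labelled by group elements. Boundary $g^*$ strands are converted to $g^{-1}$ strands by $\delta$'s, and each bubble is evaluated by \ref{R:bubble}. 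Because all the generators are invertible, I can then bend everything into the form (merge all inputs) $\circ$ (split into outputs). Concretely, a morphism $x\to y$ becomes the composite $x\to \bar x\to \bar y\to y$, where $\bar x$ is the single strand labelled by the product. This uses the fact that $\mu_{g,h}\circ\mu_{g,h}^{-1}=\id$ and $\mu^{-1}\circ\mu=\id$ are part of the data of isomorphisms in the free category.

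The main obstacle is showing that any trivalent diagram between single strands $a\to a$ equals a scalar times $\id_a$ in the quotient, and that different trees merging the same word agree up to scalar. For trees this is the standard coherence argument. The associativity moves \ref{R:associativity} act transitively on binary bracketings, as in Mac Lane's theorem, and unit legs are removed by \ref{R: id}, so any two merge trees differ by a scalar. A general planar trivalent graph need not be a tree, and it may contain split--merge loops in the ``wrong'' order. I would handle this by induction on the number of vertices. I would pick a face of the graph whose boundary, after associativity moves, contains an adjacent merge-then-split or split-then-merge pair. The first case collapses by invertibility. In the second, the face is a bigon $\mu^{-1}_{g,h}\mu_{g,h}$, which I would evaluate to a scalar, namely $1$, again by invertibility. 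Euler characteristic guarantees such a small face exists. The remaining configurations involve loops produced by the cups and caps; these I would reduce using \ref{R:zigzag} and \ref{R:bubble} before trivalent reduction begins.

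Once every hom space of the quotient is at most one-dimensional and spanned by $s_{x,y}$ with $\bar\Phi(s_{x,y})\neq 0$, it follows that $\bar\Phi$ is an isomorphism $\cC(X,F)/\cR\cong\cD_0$. Since $\cD=\vertchar{c}(\cD_0)$ and $\cD$ is pivotally equivalent to $\oVec$ (Ng's strictification theorem), the triple $(X,F,R)$ is a presentation of $\oVec$.
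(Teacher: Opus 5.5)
Your architecture is the paper's. You build the functor to $\cD_0$, prove fullness, note that hom spaces between words of different degree vanish, conjugate by merge isomorphisms to reduce to $\Hom(a\to a)$, and clear cups, caps and $\delta$'s with (R6)--(R9). The gap is in the only step with real content, the evaluation of a planar trivalent graph $a\to a$, where your case analysis is wrong.

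A merge followed by a split, $\mu_{c,d}\circ\mu^{-1}_{a,b}$ with $ab=cd$, does not collapse by invertibility unless $(c,d)=(a,b)$. For instance, $\mu_{g,hk}\circ\mu^{-1}_{gh,k}\colon gh\otimes k\to g\otimes hk$ is not even an endomorphism. By (R1) it equals $\omega(g,h,k)$ times the sideways H $(\id_g\otimes\mu^{-1}_{h,k})\circ(\mu_{g,h}\otimes\id_k)$, which is the paper's (R10). That move keeps the number of vertices fixed rather than lowering it. Likewise, a split immediately followed by a merge along a face boundary is the bigon $\mu^{-1}_{g,h}\circ\mu_{g,h}$ only when the two vertices are the bottom and top of a two-sided face; otherwise the pair is again an H. Nor does the Euler characteristic hand you a bigon or such a pair. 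Closing the diagram up on $S^2$ and using trivalence gives $\sum_n(1-n/6)|F_n|=2$. Hence there are at least three faces with at most five sides, at least one of them inside the disk (Lemma~\ref{lem: small face}).

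What is missing is the bridge from ``some face has at most five sides'' to ``there is a bigon''. This is the paper's Lemma~\ref{lem:ngon}. A triangle, square or pentagon face can be shrunk by one side by applying (R1) or (R10) to two adjacent vertices on its boundary, without changing the number of vertices, until it becomes a bigon. The bigon then collapses via $\mu^{-1}_{g,h}\circ\mu_{g,h}=\id_{gh}$, removing two vertices. Only this gives your induction on the number of vertices a step that actually decreases it.

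In fact, every interior face of an upward diagram runs from a split at its bottom to a merge at its top. So the same shrinking move works for an interior face of any size, and the Euler count could even be bypassed. But some argument of this kind, using the H-move rather than invertibility, has to be supplied.

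Your remark about ``loops produced by the cups and caps'' surviving into the trivalent stage should also go. Those were already removed by (R6)--(R9) and (R4) before that stage.
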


Since \ref{Mor: dot} and \ref{R: pivotal} -- \ref{R:replacement4} are generated by \ref{Mor: trivalent} -- \ref{Mor: cup and cap} and \ref{R:associativity} -- \ref{R:bubble}, we may use them in proving Theorem \ref{mainthm}.
\begin{defn}
    For a diagram with only trivalent vertices \ref{Mor: trivalent}, i.e., without rotation of the strands, a \textbf{rigid isotopy} is a planar isotopy which does not introduce or cancel any local maximum or minimum.
\end{defn}

\begin{lem}\label{lem:ngon}
    Relations \ref{R:associativity}--\ref{R:replacement4} are sufficient to reduce the number of vertices by 2 for each bi-gon, triangle, square, and pentagon only consisting of trivalent vertices \ref{Mor: trivalent}. 
\end{lem}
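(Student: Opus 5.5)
The plan is to work face by face, using only local rewrites that keep the total number of trivalent vertices fixed while shrinking the face, and then to finish with a single bigon cancellation. First fix conventions. After a rigid isotopy, every edge of a diagram built only from \ref{Mor: trivalent} is monotone in the vertical direction. Each vertex is then either a \emph{split} $\mu_{g,h}$ (one edge in from below, two out above) or a \emph{merge} $\mu_{g,h}^{-1}$ (two in, one out). Consequently every face $P$ has well-defined local minima and maxima among its corners. A local minimum must be a split whose two outgoing edges both bound $P$, and a local maximum must be a merge.

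Next I assemble a small toolkit of moves, all consequences of \ref{R:associativity}--\ref{R:replacement4}.
\begin{enumerate}
\item \textbf{(a) Bigon cancellation.} We have $\mu_{g,h}^{-1}\circ\mu_{g,h}=\id_{gh}$ and $\mu_{g,h}\circ\mu_{g,h}^{-1}=\id_{g\otimes h}$, because the generators in \ref{Mor: trivalent} are isomorphisms.
\item \textbf{(b) Two adjacent splits.} This is \ref{R:associativity} read left to right or right to left.
\item \textbf{(c) Two adjacent merges.} Inverting \ref{R:associativity} gives the same relation for merges, with scalar $\omega(g,h,k)^{-1}$.
\item \textbf{(d) A merge next to a split (the ``$H=I$'' move).} We want
\[
(\mu_{a,b}^{-1}\otimes \id_c)(\id_a\otimes \mu_{b,c})=\omega(a,b,c)^{\pm1}\,\mu_{ab,c}\,\mu_{a,bc}^{-1},
\]
together with its mirror image. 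To derive it, pre- and post-compose \ref{R:associativity} for $(a,b,c)$ with the appropriate inverses and cancel using (a).
\end{enumerate}
Degenerate labels equal to $\mathbf 1$ are removed with \ref{R: id}. Each of the moves (b)--(d) keeps the vertex count fixed and changes only the edge shared by the two vertices involved.

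The core step is a reduction: any $n$-gon with $3\le n\le 5$ can be turned into an $(n-1)$-gon by one of the moves (b)--(d), applied to an edge of $P$ whose endpoints are both corners of $P$. The choice of move is dictated by the corners.
\begin{itemize}
\item Suppose a corner $v$ of $P$ has an external edge, and $v$ is adjacent along $\partial P$ to a corner $w$ of the same type. Then applying (b) or (c) to the edge $vw$ slides the external edge of $v$ off $P$ onto the far side of $w$, so $P$ loses one corner.
\item If the two adjacent corners have opposite types, apply (d) to their shared edge. This exchanges which endpoint carries the external leg and again removes a corner from $P$.
\end{itemize}
Iterating the reduction brings $P$ down to a bigon, which is removed by (a); that cancellation is the drop of two vertices. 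Because $n\le 5$, the admissible corner-type patterns around $\partial P$ can be listed explicitly. The list is constrained by monotonicity: a face has as many local minima as local maxima, and with at most five corners there are at most two of each. I would check each pattern directly.

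I expect the main obstacle to be the faces with two local minima and two local maxima. The simplest is a zigzag square whose corners alternate split, merge, split, merge, with no edge of $P$ running from a unique bottom corner to a unique top corner. In that case no move of type (b) or (c) is available, and I must make sure that (d) can be applied without rotating any strand, which would take us outside rigid isotopy. My first attempt is to apply (d) to an edge joining a minimum to a maximum, and to check that this reduces the number of extrema of $P$ to one of each. If the resulting picture instead requires a cup or a cap to make sense, I would fall back on \ref{R: replacement}--\ref{R:replacement4} to trade cups and caps for trivalent vertices decorated by $\delta_g$, together with \ref{R:zigzag}. The remaining work is to track the scalar each move produces, a product of $\omega$'s and $\pi$'s, which is routine.
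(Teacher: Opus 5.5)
Your strategy is the same as the paper's: shrink the face one corner at a time using \ref{R:associativity} and the H$=$I move, then cancel the resulting bigon. Your move (d) is the paper's \ref{R:H shape}. However, the reduction step is not established, because you never determine what a face can look like. This causes two problems.

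First, the ``zigzag'' faces you single out as the main obstacle cannot occur as bounded faces. As you observe, an extremum of $\partial P$ can only be a split with $P$ between its two outputs, or a merge with $P$ between its two inputs. So \emph{every} extremal corner of a bounded face is convex: the face lies on the inner side of the turn. For a simple closed piecewise-monotone curve, the number of convex extrema minus the number of concave ones is $2$. This follows from total turning $2\pi$, or equivalently from $\chi(P)=1$ read off from sublevel sets. Faces are disks here, since every vertex can be followed downward to the source strand. Hence each bounded face has exactly one minimum $m$ and one maximum $M$, and $\partial P$ consists of two monotone chains from $m$ to $M$. Your fallback for this phantom case, reintroducing cups and caps via \ref{R: replacement}--\ref{R:replacement4}, is therefore unnecessary. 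It would also leave the rigid trivalent setting in which the vertex count is being tracked.

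Second, your rule ``opposite types $\Rightarrow$ apply (d) to the shared edge'' is false in general. Suppose a split $v$ is followed directly by a merge $w$ on the same chain. Then both external legs lie on the side away from $P$, and $vw$ is a right output feeding a right input (or left into left). That picture is neither an H nor an I. The same happens when $m$ and $M$ are joined by a single edge.

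The correct choice of move uses the chain structure. If $n\ge 3$, some chain carries side corners $s_1,\dots,s_k$, listed bottom to top.
\begin{itemize}
\item If $s_1$ is a split, apply \ref{R:associativity} to the edge $ms_1$.
\item If $s_k$ is a merge, apply the inverse of \ref{R:associativity} to $s_kM$.
\item Otherwise some consecutive pair $s_i$ (merge), $s_{i+1}$ (split) forms an I whose external legs both point away from $P$. Read \ref{R:H shape} from right to left to turn it into an H with only one vertex on $P$. Any I of the form $\mu_{x,y}\mu^{-1}_{u,v}$ fits \ref{R:H shape} with $g=x$, $h=x^{-1}u$, $k=v$.
\end{itemize}
Each move preserves the vertex count and removes one corner of $P$, so you eventually reach a bigon and cancel it. The paper's lists of squares and pentagons, up to rigid isotopy and reflection, implicitly encode exactly this one-min/one-max structure.
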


\begin{proof}We omit the arrows since all are pointing upward.\\
    \begin{itemize}
        \item 
        bigon: We have
     \[\tikzmath{
    \draw[thick, blue] (-.3,0) arc(180:-180:.3) ;
    \draw[thick,blue] (0,.3) -- (0,.6);
    \draw[thick,blue](0,-.3) -- (0,-.6);
    \node[blue] at (0,-.8) {$\scriptstyle gh$};
    \node[blue] at (0,.8) {$\scriptstyle gh$};
    \node[blue] at (-.45,0) {$\scriptstyle g$};
    \node[blue] at (.45,0) {$\scriptstyle h$};
    }
    =
    \tikzmath{
    \draw[thick, blue] (0,-1) -- node[right] {$\scriptstyle gh$}    (0,.2);
    }\]  
    by definitions of trivalent generating morphisms and their inverses in \ref{Mor: trivalent};

        \item 
        triangle:
        We have

        \[\scalebox{.8}{$\tikzmath{
        \draw[thick, blue] (0,-.5)node[below]{$\scriptstyle ghk$}-- (0,0)   node[left]{$\scriptstyle gh$} --(-.4,.4) -- (-.9,.9) node[above]{$\scriptstyle g$};
        \draw[thick, blue] (0,0)-- node[right, xshift=-.2cm, yshift=-.2cm]{$\scriptstyle k$} (.7,.7) -- (1,1) node[above]{$\scriptstyle hk$};
        \draw[thick, blue] (-.4,.4)--(.7,.7);
        \node[blue] at (0,.7) {$\scriptstyle h$};
        }
        =
        \omega(g,h,k)^{-1}
        \tikzmath{
        \draw[thick, blue] (0,-.5) node[below]{$\scriptstyle ghk$} -- (0,0) --(-.9,0.9) node[above]{$\scriptstyle g$} ;
        \draw[thick, blue] (0,0) -- (.3,.3);
        \draw[thick,blue] (.8,.8) -- (1,1)node[above]{$\scriptstyle hk$};
        \draw[thick,blue] (.8,.8) to[bend right=40] (.3,.3);
        \draw[thick,blue] (.8,.8) to[bend left=40] (.3,.3);
        \node[blue] at (.3,0) {$\scriptstyle hk$};
        \node[blue] at (.45,.9) {$\scriptstyle h$};
        \node[blue] at (.75,.25) {$\scriptstyle k$};
        }
        =
        \omega(g,h,k)^{-1}
        \tikzmath{
        \draw[thick, blue] (0,-.5) node[below]{$\scriptstyle ghk$}-- (0,0) --(-.9,0.9) node[above]{$\scriptstyle g$} ;
        \draw[thick, blue] (0,0) --   (1,1)node[above]{$\scriptstyle hk$};
        }$}\]
        by relation \ref{R:associativity} and the bi-gon reduction above. The number of vertices is reduced by 2. Every other triangle is a vertical or horizontal symmetry of the triangle above up to rigid isotopy, and can be reduced similarly.
        
        \item 
        square:
        We notice that by pre-composing with isomorphism $\mu_{gh,k}$, the folllowing relation is a triangle reduction proved above:

\begin{enumerate}[label=\textup{(R\arabic*)}]\addtocounter{enumi}{+9}

\item \label{R:H shape}

$\tikzmath{
\draw[thick, blue] (0,0) node[below]{$\scriptstyle gh$} -- (0,1) node[above]{$\scriptstyle g$};
\draw[thick, blue] (.8,0) node[below]{$\scriptstyle k$} -- (0.8,1) node[above]{$\scriptstyle hk$};
\draw[thick, blue](0,.3) -- (.8,.7);
\node[blue] at (.4,.7) {$\scriptstyle h$};
}
=
\omega(g,h,k)^{-1}
\tikzmath{
\draw[thick,blue] (0,0) node[below]{$\scriptstyle gh$} -- (.4,.4) -- node[right]{$\scriptstyle ghk$} (.4,.8) -- (0,1.2) node[above]{$\scriptstyle g$};
\draw[thick, blue] (.8,0) node[below]{$\scriptstyle k$} --(.4,.4);
\draw[thick, blue] (.4,.8) -- (.8,1.2) node[above]{$\scriptstyle hk$};
}$
\end{enumerate}

        Every square or its vertical/horizontal reflection is rigid isotopic to one of the following:

        \[\scalebox{1.2}{$\tikzmath{
        \draw[thick,blue](0,-.2)--(0,1.2);
        \draw[thick, blue](.8,-.2) -- (.8,1.2);
        \draw[thick,blue](0,.1)--(.8, .5);
        \draw[thick,blue](0,.5)--(.8,.9);
        \draw[thick,blue,dotted] (-.2,-.1)-- (.4,-.1) -- (.4,.9) -- (-.2,.9) -- (-.2,-.1);
        }
        \qquad
        \tikzmath{
        \draw[thick,blue](0,-.2)--(0,1.2);
        \draw[thick, blue](.8,-.2) -- (.8,1.2);
        \draw[thick,blue](0,.3)--(.8, 0);
        \draw[thick,blue](0,.5)--(.8,.9);
        \draw[thick,blue,dotted] (-.2,.4)-- (1,.4) -- (1,1) -- (-.2,1) -- (-.2,.4);
        }
        \qquad
        \tikzmath{
        \draw[thick,blue](-.9,-.9) -- (0,0) -- (0,.5);
        \draw[thick,blue](.9,-.9) -- (0,0);
        \draw[thick,blue](-.2,-.2) -- (0,-.8) -- (0,-1);
        \draw[thick,blue](0,-.8) -- (.5,-.5);
        \draw[thick,blue,dotted](-.5,-.4)--(.4,-.4) -- (.4,.2)--(-.5,.2)--(-.5,-.4);
        }
        \qquad
        \tikzmath{
        \draw[thick,blue](-.9,-.9) -- (0,0) -- (0,.5);
        \draw[thick,blue](.9,-.9) -- (0,0);
        \draw[thick,blue](-.2,-.2) -- (0,-.6) -- (0,-1);
        \draw[thick,blue](0,-.6) -- (.8,-.8);
        \draw[thick,blue,dotted](-.5,-.4)--(.4,-.4) -- (.4,.2)--(-.5,.2)--(-.5,-.4);}$}
        \]

        which can be reduced to a triangle while the number of vertices remains the same, by applying \ref{R:associativity} or \ref{R:H shape} to the boxed region.
        \item 
        pentagon: 
        Each pentagon or its vertical/horizontal reflection is rigid isotopic one of the following:
        
        \[\tikzmath{
        \draw[thick,blue](0,-.2) -- (0,1.5);
        \draw[thick,blue](1,-.2) -- (1,1.5);
        \draw[thick,blue](0,.9) -- (1,1.2);
        \draw[thick,blue](0,.7)--(.5,.3) -- (1,.7);
        \draw[thick,blue](.5,.3) -- (.5,-.2);
        \draw[thick,blue,dotted](-.2,.8) -- (1.2,.8) -- (1.2,1.4) -- (-.2,1.4) -- (-.2,.8);
        }
        \qquad
        \tikzmath{
        \draw[thick,blue](0,-.2) -- (0,1.5);
        \draw[thick,blue](1,-.2) -- (1,1.5);
        \draw[thick,blue](0,.9) -- (1,1.2);
        \draw[thick,blue](0,.7)--(.5,.3) -- (1,.1);
        \draw[thick,blue](.5,.3) -- (.5,-.2);
        \draw[thick,blue,dotted](-.2,.8) -- (1.2,.8) -- (1.2,1.4) -- (-.2,1.4) -- (-.2,.8);
        }
        \qquad
        \tikzmath{
        \draw[thick,blue](0,-.2) -- (0,1.5);
        \draw[thick,blue](1,-.2) -- (1,1.5);
        \draw[thick,blue](0,.9) -- (1,1.2);
        \draw[thick,blue](0,.1)--(.5,.3) -- (1,.7);
        \draw[thick,blue](.5,.3) -- (.5,-.2);
        \draw[thick,blue,dotted](-.2,.8) -- (1.2,.8) -- (1.2,1.4) -- (-.2,1.4) -- (-.2,.8);
        }
        \qquad
        \tikzmath{
        \draw[thick,blue](-1.2,-1.2) -- (0,0) -- (1.2,-1.2);
        \draw[thick,blue](0,0) -- (0,.3);
        \draw[thick,blue](-.6,-.6) -- (-.2,-.9) -- (.2,-.7) -- (.3,-.3);
        \draw[thick,blue](-.2,-.9) -- (-.2,-1.2);
        \draw[thick,blue](.2,-.7) -- (.2,-1.2);
        \draw[thick,blue,dotted](-.7,-.5) -- (.7,-.5)--(.7,.2) -- (-.7,.2) -- (-.7,-.5);
        }
        \qquad
        \tikzmath{
        \draw[thick,blue](-1.2,-1.2) -- (0,0) -- (1.2,-1.2);
        \draw[thick,blue](0,0) -- (0,.3);
        \draw[thick,blue](-1,-1) -- (-.2,-.9) -- (.2,-.7) -- (.3,-.3);
        \draw[thick,blue](-.2,-.9) -- (-.2,-1.2);
        \draw[thick,blue](.2,-.7) -- (.2,-1.2);
        \draw[thick,blue,dotted](-.7,-.5) -- (.7,-.5)--(.7,.2) -- (-.7,.2) -- (-.7,-.5);
        }
        \]
        which can be reduced to a square while the number of vertices remains the same, by applying \ref{R:associativity} or \ref{R:H shape} to the boxed region. \qedhere
    \end{itemize}
\end{proof}

\begin{lem}\label{lem: small face}
     Any trivalent diagram on a disk $D^2$ satisfying the following conditions has a pentagon or smaller face. \\
     (i). It has exactly two open edges intersecting the boundary of $D^2$.\\
     (ii). Every face in it is an $n$-gon for $n \geq 2$.
\end{lem}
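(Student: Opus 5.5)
The plan is a discharging (Euler characteristic) count on the sphere. First, close up the diagram: regard $D^2$ as a hemisphere of $S^2$ and join the two open edges by a single arc through the complementary hemisphere. The result is a graph $\Gamma$ on $S^2$ in which every vertex is trivalent. Faces of $\Gamma$ correspond to faces of the original diagram, with the two boundary-touching faces closed off by the new arc. I will set up the bookkeeping so that a boundary face that is an $n$-gon in the diagram, with its open edges counted as sides, is still at most an $n$-gon in $\Gamma$. If the two open edges belong to the same strand with no vertex on it, the diagram is just an arc. In that case there is no trivalent vertex, and I will either dispose of it directly or note that it is excluded by the intended use in Theorem \ref{mainthm}. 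Alternatively, one could add the boundary circle as a cycle with two extra trivalent vertices and attach an outer face of degree $2$. I prefer the gluing, because it avoids an extra outer face absorbing the curvature.

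Next comes the count. Let $\Gamma$ have $V$ vertices, $E$ edges, $F$ faces and $c$ connected components. Trivalence gives $2E=3V$. Euler's formula on the sphere gives $V-E+F=1+c\ge 2$. Summing face degrees gives $\sum_f \deg f = 2E$. Combining these,
\[
\sum_{f}\bigl(6-\deg f\bigr) = 6F-2E = 6F-6V+4E = 6(V-E+F) \ge 12 .
\]
Hence some face has $\deg f<6$, that is, it is a pentagon or smaller. Condition (ii) guarantees that no face is a monogon, so this small face is a genuine $n$-gon with $2\le n\le 5$. Closed loop components without vertices, if any occur, I will remove beforehand; such a loop is a bubble evaluated by \ref{R:bubble}.

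The main obstacle is making sure the small face found by the count is a face of the original diagram to which Lemma \ref{lem:ngon} applies. The worry is one of the two faces touched by the glued arc. Its degree in $\Gamma$ equals its number of interior sides plus the glued edge. If we count the open edges as sides in the diagram, this is at most its $n$-gon count there, so a small face of $\Gamma$ still gives a face with at most five sides in $D^2$. If instead the intended reading requires an interior face, I would strengthen the count as follows. The two boundary faces together absorb at most $2\cdot 4=8$ of the total, and each has degree $\ge 2$. Under the fixed convention I would try to show they absorb strictly less than $12$, which forces an interior face of degree $\le 5$. Pinning down this convention and handling the degenerate configurations is where the care goes. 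These configurations are the two boundary faces coinciding, since $\Gamma$ is on a sphere and the glued edge has the same face on both sides only if it is a bridge, and disconnected pieces. The Euler inequality above is otherwise routine.
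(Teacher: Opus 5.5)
Your route is the paper's: glue the two open edges through the complementary hemisphere, pass to $S^2$, and use Euler's formula in the form $\sum_f (6-\deg f)=12$. The paper writes the same identity as $\sum_n (1-\frac n6)|F_n|=2$.

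What needs fixing is the last step, which you leave hedged. The small face has to be interior to $D^2$. That is what Lemma~\ref{lem:ngon} can reduce, and it is what the paper means by ``at least one such face in $s$''.

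\textbf{First reading.} It does not deliver an interior face, and its inference runs the wrong way. From $\deg_\Gamma f\le n$ and $\deg_\Gamma f\le 5$ nothing follows about $n$. With open edges counted as sides, a boundary face has $n=\deg_\Gamma f+1$, so a face of degree $5$ in $\Gamma$ is a hexagon in $D^2$.

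\textbf{Second reading.} This is already a complete proof: $8<12$ is exactly the bound you say you would ``try to show''. It needs one correction. The boundary faces need not have degree $\ge 2$ in $\Gamma$: if the source and target edges meet at a single vertex, the glued edge is a loop bounding a monogon. You only need degree $\ge 1$, and the argument then runs as follows.
\begin{enumerate}
\item At most two faces of $\Gamma$ touch the glued edge, each of degree $\ge 1$, so together they contribute at most $5+5=10<12$.
\item Hence some other face has $6-\deg f>0$, that is, $\deg f\le 5$.
\item Such a face is a face of the original diagram away from $\partial D^2$, because each open edge has boundary faces on both of its sides.
\item Condition (ii) gives $\deg f\ge 2$.
\end{enumerate}

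\textbf{Comparison with the paper.} The paper finishes slightly differently. Each small face contributes at most $\frac23$ to a total of $2$, so there are at least three small faces, and closing up creates only two. That count needs the faces created by closing to have at least two sides in the closed diagram, which the monogon configuration above shows can fail. Your bound using degree $\ge 1$ does not need this. Your $V-E+F=1+c$ also covers disconnected diagrams, where the paper's $V-E+F=2$ tacitly assumes connectedness.

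Finally, fix the typo in your display: $6F-2E=6F+6V-6E$, not $6F-6V+4E$. The endpoints of your chain are correct.
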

\begin{proof}
 Let $s$ be such a diagram. We connect the source and target strands and embed this closed diagram on a sphere: by doing so we created two faces.

\[
\tikzmath{
\draw[thick](0,.2) -- (0,.8) -- (-.4,1.2)--(0,1.6)--(-.4,2)--(0,2.5) ;
\draw[thick](0,.8) -- (.4,1.2)--(0,1.6);
\draw[thick](-.4,1.2)--(-.4, 2);
\draw[thick](.4,1.2) -- (0,2.5) -- (0,3.1);
\draw[thick,  dotted] (0,.2) to[bend right=90] (1.5,.2) -- (1.5, 3.1) to[bend right=90] (0,3.1);
\draw[thick, red,dotted] (-.6,.3) -- (.7,.3) -- (.7,3) -- (-.6,3)--(-.6,.3);
\node[black] at (0,1.2) {$\scriptstyle \cdots$};
\node[red] at (-.8,0.5){$\scriptstyle s$};
}
\]

Denote the number of vertices by $|V|,$ the number of edges by $|E|,$ the number of faces by $|F|,$ the number of faces with $n$ edges by $|F_n|.$ By Euler's characteristic, $|V|-|E|+|F|=2,$ we have 
\[|F| =  \sum_n  |F_n|,\hspace{.3cm} |E| = \frac{1}{2}  \sum_n  n|F_n|,\hspace{.3cm}  |V| = \frac{1}{3}  \sum_n  n|F_n|,\] since each edge is shared by 2 faces and each vertex is shard by 3 faces. Thus the Euler's characteristic yields
\[
\sum_n \left( 1 - \frac{n}{6} \right) |F_n| = 2.
\]
We have $|F_1|=0$ by assumption. Each $n$-gon for $2 \leq n \leq 5$ contributes a positive weight of at most $\frac{2}{3}$ to this sum. Each hexagon contributes 0 weight. Each $n$-gon for $n \geq 7$ contributes a negative weight. Thus there are at least 3 pentagon or smaller faces in this closed diagram, i.e., there is at least one such face in $s.$
\end{proof}

\begin{prop}\label{lem:g_to_g}
    Relations \ref{R:associativity}--\ref{R:replacement4} are sufficient to evaluate any diagram in $ \Hom(g \to g)$ for any $g \in G$ in the strict pivotal category that $(X,F,R)$ represents.
\end{prop}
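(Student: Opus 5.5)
Every morphism in $\Hom(g \to g)$ of the free category $\cC(X,F)$ is a linear combination of diagrams built from the generators \ref{Mor: trivalent}--\ref{Mor: cup and cap}. I will show that each such diagram equals a scalar multiple of $\id_g$, and that the scalar is computed using only \ref{R:associativity}--\ref{R:replacement4}. The plan is to convert the diagram into a purely upward-oriented trivalent graph with two boundary points. I then shrink it by induction on the number of vertices, using Lemma \ref{lem: small face} and Lemma \ref{lem:ngon}.

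\emph{Step 1 (normal form).} First I remove all cups and caps. Using \ref{R: replacement}--\ref{R:replacement4}, each $\ev$ and $\coev$ is replaced by a scalar times a trivalent vertex with a dotted $\mathbf{1}$ strand, where a $\delta_g^{\pm1}$ dot converts a $g^*$ strand into a $g^{-1}$ strand. Along each strand labeled $g^*$, the dots created at its two ends are paired $\delta_g$ with $\delta_g^{-1}$ and cancel. Strands that carry $g^*$ throughout with no endpoints form closed loops; these are removed by \ref{R:bubble} (after any isotopy, using \ref{R:zigzag}). Dotted $\mathbf{1}$ strands are deleted by \ref{R: id}. After this step the diagram is a scalar times a graph whose vertices are all of the form $\mu$ or $\mu^{-1}$, with every edge labeled by a group element. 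The diagram may still contain local extrema, since a strand can run downward as a $g^{-1}$ edge after conversion. I expect to handle these with \ref{R: pivotal}, which lets a vertex be rotated at the cost of a scalar, so that every edge points upward. This gives a trivalent diagram of the kind considered in Lemma \ref{lem:ngon}, up to rigid isotopy.

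\emph{Step 2 (induction).} Suppose the resulting diagram has at least one vertex. It lies in a disk with exactly two open edges meeting the boundary, namely the source $g$ and the target $g$. Each face of a connected trivalent graph without self-loops is an $n$-gon with $n\ge 2$; a self-loop cannot occur among upward-oriented trivalent vertices. If the graph is disconnected, any closed component is handled first by the same argument applied to the sphere. Lemma \ref{lem: small face} then provides a face with at most five sides, and Lemma \ref{lem:ngon} reduces the vertex count by two while multiplying by a scalar. By induction the diagram becomes a scalar times a single strand labeled $g$, that is, a multiple of $\id_g$. Note that $g=\mathbf{1}$ reduces to the empty diagram.

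\emph{Main obstacle.} The main obstacle is Step 1. The claim that every diagram can be made upward-oriented, with faces matching the configurations in Lemma \ref{lem:ngon} up to rigid isotopy and reflection, requires care. Turning a downward $\mu$ vertex into an upward one needs a precise rotation rule built from \ref{R: pivotal} and \ref{R:replacement2}--\ref{R:replacement4}, together with control over the scalars that appear. It is also unclear that Lemma \ref{lem:ngon} covers every small face that can occur in a general planar position rather than a standard one. My first approach is to prove a local lemma: rotating any trivalent vertex by $\pm 2\pi/3$ is expressible via cups and caps together with \ref{R:replacement2}--\ref{R:replacement4}. This puts each small face into one of the listed standard forms before applying Lemma \ref{lem:ngon}.
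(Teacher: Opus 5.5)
Your overall plan is the paper's: replace cups and caps using \ref{R: replacement}--\ref{R:replacement4}, cancel the $\delta$'s, then apply Lemma \ref{lem: small face} and Lemma \ref{lem:ngon}. The gap is Step 1, which you leave open although it carries the content, and your diagnosis of what remains there is off. After the replacements nothing points downward:
\begin{itemize}
\item the cups $\coev_g$ and $\coev_{g^*}$ become $\mu_{g,g^{-1}}$ and $\mu_{g^{-1},g}$, with $\delta_g\colon g^{-1}\to g^*$ on the $g^*$ leg;
\item the caps $\ev_g$ and $\ev_{g^*}$ become $\mu^{-1}_{g^{-1},g}$ and $\mu^{-1}_{g,g^{-1}}$, with $\delta_g^{-1}$ on the $g^*$ leg.
\end{itemize}
No $g^*$ reaches the boundary of a diagram in $\Hom(g\to g)$, so every $g^*$ segment runs from a cup up to a cap. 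It therefore becomes $\delta_g^{-1}\circ\delta_g=\id_{g^{-1}}$, an upward $g^{-1}$ edge. So there is nothing to rotate. A vertex-rotation lemma built from cups and caps would also be circular, because those cups and caps go straight back into \ref{R: replacement}--\ref{R:replacement4}.

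The real defect is one you introduce yourself by ``deleting dotted strands by \ref{R: id}''. That relation only removes a $\mathbf 1$ that is one of the two factors, as in $\mu^{\pm1}_{h,\mathbf 1}$ or $\mu^{\pm1}_{\mathbf 1,h}$. The dotted strands produced above are instead the product leg $gg^{-1}=\mathbf 1$. Dropping them leaves 2-valent cusps, and these are exactly your ``local extrema''. With cusps present the diagram is not trivalent, so the count $|V|=\tfrac13\sum_n n|F_n|$ in Lemma \ref{lem: small face} fails. A face can also then have several local minima, so it need not be one of the shapes in Lemma \ref{lem:ngon}.

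The paper does the opposite. At each 2-valent cusp it draws the $\mathbf 1$ strand and connects it monotonically to a neighbouring edge, using \ref{R: id} to insert $\mu^{\pm1}_{h,\mathbf 1}$ or $\mu^{\pm1}_{\mathbf 1,h}$ there. The result is an upward trivalent graph in which every vertex has both an incoming and an outgoing edge. This is what makes both lemmas apply, and it settles your other two worries:
\begin{itemize}
\item Closed components become attached through their lowest cusp and highest cap, so no separate sphere argument is needed. Yours would not apply as stated anyway, since those components contain 2-valent cusps.
\item A bounded face with two local minima would force a vertex all of whose edges point the same way. So every face consists of two upward chains between a single bottom vertex and a single top vertex. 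That is one of the configurations of Lemma \ref{lem:ngon}, up to rigid isotopy and reflection.
\end{itemize}
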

\begin{proof}
Let a diagram $s_0: g \to g$ be given. Use \ref{R: replacement} -- \ref{R:replacement4} to replace all the cup/caps \ref{Mor: cup and cap} in a diagram. Then for each $\delta_g$ that appears in the diagram, it needs to be in between two trivalent vertices, and there needs to exist a $\delta_g^{-1}$ adjacent to it for the diagram to make sense, since trivalent vertices are only defined on objects $\{g \}_{g \in G}$. Hence all the $\delta_g$ \ref{Mor: dot} are cancelled. For any 2-valent cusp in the diagram, add a dotted line to represent 1 and connect it to a neighboring edge, so that each vertex is trivalent. We can do so due to \ref{R: id}. We are left to reduce a diagram $s: g \to g$ with only trivalent vertices \ref{Mor: trivalent}, and each edge is pointing upward. We consider the diagram up to rigid isotopy. It is clear that every face in $s$ has at least 2 edges. Then by Lemma \ref{lem: small face} and Lemma \ref{lem:ngon} we are done. 
\end{proof}

\begin{proof}[Proof of Theorem~\ref{mainthm}]
Denote by $\cP$ the strict pivotal category $\cC(X,F)/\cR$. Then one can trivially define a pivotal functor $P: \cP \to \cD_0$. $P$ is surjective, since all the generators and relations in $\cP$ come from generators and relations in $\cD_0$. Let $f: x \to y $ in $\cP$ be given such that $ P(f) = 0$. The source and target of $f$ are both words in $\{g,\ g^*\}$. If $x \not\simeq y,$ then $f = 0$ since all generating morphisms are isomorphisms. If $x \simeq y,$ then by composing with isomorphisms, $r,s,$ we obtain a morphism $f_1 = r \circ f \circ s  : g \to g$ for some $g$ with $P(f_1)=0$. Since $f_1 = \lambda \id_g$ by Proposition \ref{lem:g_to_g}, $P(f_1)=0$ implies $f_1=0.$  Hence $f=0$ and $P$ is an equivalence.
\end{proof}

\bibliographystyle{alpha}
\bibliography{bibliography.bib}
\end{document}